\theoremstyle{plain}
\newtheorem{theorem}{Theorem}[section]
\theoremstyle{definition}
\newtheorem{example}[theorem]{Example}
\theoremstyle{remark}
\newtheorem{remark}{Remark}
\begin{document}


\title{A class of flexible and efficient partitioned Runge-Kutta-Chebyshev methods for some time-dependent partial differential equations}

\author{
	\name{Xiao Tang\textsuperscript{a}\thanks{Corresponding author: Xiao Tang. Email: tangx@xtu.edu.cn, 1293060753@qq.com} and Junwei Huang\textsuperscript{a}}
	\affil{\textsuperscript{a}School of Mathematics and Computational Science \& Hunan Key Laboratory for Computation and Simulation in Science and Engineering, Xiangtan University, Hunan 411105, China. }
}

\maketitle

\begin{abstract}
Many time-dependent partial differential equations (PDEs) can be transformed into an ordinary differential equations (ODEs) containing moderately stiff and non-stiff terms after spatial semi-discretization.
In the present paper, we construct a new class of second-order partitioned explicit stabilized methods for the above ODEs.
We treat the moderately stiff term with an $s$-stage Runge-Kutta-Chebyshev (RKC) method and treat the non-stiff term with a $4m$-stage explicit Runge-Kutta (RK) method.
Different from several existing partitioned explicit stabilized methods that employ fixed-stage RK methods to handle the non-stiff term,
both the parameters $s$ and $m$ in our methods can be flexibly adjusted as needed for the problems.
This feature endows our methods with superior flexibility and applicability compared to several existing partitioned explicit stabilized methods, as demonstrated
in several specific numerical examples (including the advection-diffusion equations, the Burgers equations, the Brusselator equations and the damped wave equations).
\end{abstract}

\begin{keywords}
Runge-Kutta-Chebyshev methods, explicit stabilized methods, partitioned methods, time-dependent partial differential equations.
\end{keywords}

\section{Introduction}
\label{sec1}

In this paper, we focus on the partitioned explicit stabilized methods for the ordinary differential equation (ODE) with the form
\begin{flalign}
	y'(t) = f_D(y(t))+f_A(y(t)), 	\quad y(t_0) = a\in \mathbb{R}^d, \quad t \in [t_0, T], \label{eq:1.1}
\end{flalign}
where $f_D$ and $f_A$ represent the moderately stiff and non-stiff terms, respectively.
An autonomous form is considered here for simplicity. We emphasize that the methods proposed in this paper can be readily extended to the non-autonomous form.
Many time-dependent partial differential equations (PDEs) can be transformed into an ODE systems of the form \eqref{eq:1.1} after spatial semi-discretization,
such as the advection-diffusion equations, the Burgers equations, the Brusselator equations and the damped wave equations.

Due to the presence of the stiff term, the traditional explicit methods (see, e.g., \cite{hairer1993,hairer1996}) will face a severe step size restriction for solving the ODE \eqref{eq:1.1}.
The implicit or implicit-explicit methods (see, e.g., \cite{alexander1977,ascher1997,hairer1996,huang2021,izzo2017,zharovsky2015}) can effectively avoid the step size restriction problem,
but they need to solve a linear or even nonlinear system at each step.
When the stiff term in the ODE \eqref{eq:1.1} is complex in form or high-dimensional, the specific implementation of implicit or implicit-explicit methods is quite challenging.

Runge-Kutta-Chebyshev (RKC) methods (see, e.g., \cite{van1980,verwer1990,verwer1996,tang2020}) are a class of explicit methods
with an extended stability domain along the negative real axis.
An important feature of the RKC methods is that the length of their stability domain along the negative real axis grows quadratically with $s$ (the number of stages).
Because the RKC methods are explicit methods, they are very easy to implement.
Meanwhile, thanks to the three-term recurrence relation, the storage demand of the RKC methods are independent of $s$ and modest.
Therefore, the RKC methods are usually very efficient for the moderately stiff ODEs arising from the
spatial semi-discretization of purely diffusive or diffusion dominated advection-diffusion problems
(very small Peclet number).
For other types of explicit stabilized methods, we refer to references \cite{abdulle2002,abdulle2001,lebedev1994,medovikov1998,martin2009,kleefeld2013,martin2016,meyer2014,o2019runge,o2015class}.

For the ODE \eqref{eq:1.1}, it is evidently more reasonable to construct a partitioned explicit stabilized method that handles stiff and non-stiff terms separately,
rather than applying an explicit stabilized method directly.
There are several partitioned explicit stabilized methods in existing literature.
Based on the RKC methods, the author of \cite{zbinden2011} constructed a class of partitioned explicit stabilized methods (called the PRKC methods).
Recently, the author of \cite{almuslimani2023} proposed another type of partitioned explicit stabilized methods (called the ARKC methods).
Based on another type of explicit stabilized methods (called the ROCK2 methods) \cite{abdulle2001},
the authors of \cite{abdulle2013} constructed a type of partitioned implicit-explicit methods (called the PIROCK methods),
which reduce to the partitioned explicit stabilized methods when extremely stiff terms are not involved.

The partitioned methods PRKC, ARKC and PIROCK  share a common characteristic: the number of stages in the Runge-Kutta (RK) method for handling $f_A$ is small and fixed.
This characteristic results in a narrow stability domain along and near the imaginary axis.
Therefore, when solving the problems that have high requirements for both the length of the method's stability domain along the negative real axis
and the width along (or near) the imaginary axis (such as the advection-diffusion equations with large Peclet number and the damped wave equations \cite{sommeijer2007,verwer2009,hundsdorfer2013}),
the computational efficiency of the partitioned methods PRKC, ARKC and PIROCK suffers significantly (see below).

In this paper, we construct a new class of partitioned explicit stabilized methods.
We treat $f_D$ with an $s$-stage RKC method and treat $f_A$ with a $4m$-stage explicit RK method,
where the parameters $s$ and $m$ can be flexibly adjusted according to the spectral radius of the Jacobian matrices of $f_D$ and $f_A$, respectively.

Compared to the partitioned methods PRKC, ARKC and PIROCK (explicit version), our new methods have the following features:

$\bullet$ \hangindent=2em \hangafter=1 the flexibility of our methods is remarkably high since both parameters $s$ and $m$ can be flexibly adjusted;

$\bullet$ \hangindent=2em \hangafter=1 our methods can demonstrate strong applicability because the stability domain of our methods can cover a rectangular area symmetric with respect to the negative real axis,
with its length extending along the negative real axis up to $0.65s^2$ and its width reaching up to $2.15m$ along the positive imaginary axis;

$\bullet$ \hangindent=2em \hangafter=1 for the problems that have high requirements for both the length of the method's stability domain along the negative real axis
and the width along (or near) the imaginary axis, our methods are more efficient than the partitioned methods PRKC, ARKC and PIROCK (explicit version);

$\bullet$ \hangindent=2em \hangafter=1 the construction idea of our methods is easily generalizable; For example,
by replacing the RKC methods with another type of explicit stabilized methods proposed in \cite{abdulle2002,abdulle2001,lebedev1994,medovikov1998,martin2009,kleefeld2013,martin2016,meyer2014},
other types of partitioned explicit stabilized methods can be easily devised (see Remark \ref{rem1}).

The rest of this paper is organized as follows. The next section reviews the RKC methods and the partitioned methods PRKC, ARKC and PIROCK.
In Section \ref{sec.3}, our new partitioned explicit stabilized methods are introduced and analyzed.
Section \ref{sec.4} presents some comparisons of several types of partitioned explicit stabilization methods.
In Section \ref{sec.5}, we introduce the control strategy for variable time step-size as well as the selection strategies for $s$ and $m$.
Finally, the numerical results are presented in Section \ref{sec.6}.

\section{Review of RKC methods and several partitioned explicit stabilized methods}
\label{sec.2}

For the convenience of subsequent description and comparison, we first briefly review several existing explicit stabilized methods (including the RKC methods, the PRKC methods, the ARKC methods and the PIROCK methods) in this section.

\subsection{The RKC methods}

Let us start by introducing the classic RKC methods (see, e.g., \cite{van1980,verwer1990}) for solving the following autonomous ODE
\begin{align}
	y'(t) &= f(y(t)), \quad y(t_0) = a \in\mathbb{R}^d, \quad t \in [t_0, T]. \label{eq:2.1}
\end{align}
Let $h=\frac{T-t_0}{N},~N\in \mathbb{Z}^+$ represent the time step-size.
Use $y_n$ to represent the numerical solution of the exact solution $y(t_n)$ with $t_n = t_0+nh$. Then, the classic $s$-stage ($s\geq 2$) RKC method can be defined as
\begin{align}
	K_0 &= y_n, \nonumber\\
	K_1 &= y_n + \tilde{u}_1 h F_0,\nonumber \\
	K_j &= u_j K_{j-1} + v_j K_{j-2} + (1 - u_j - v_j)K_0 + \tilde{u}_j h F_{j-1} + \tilde{\gamma}_j h F_0, \quad  2 \leq j\leq s, \nonumber\\
	y_{n+1} &= K_s, \quad n = 0, 1, \dots, N - 1, \ \label{eq:2.2}
\end{align}
where $y_0=y(t_0)$, the coefficients $\tilde{u}_j, u_j,v_j,\tilde{\gamma}_j\in\mathbb{R}$, and $F_j = f(K_j)$.
It can be seen from \cite{verwer1990} that the RKC method \eqref{eq:2.2} is second-order consistent for ODE system \eqref{eq:2.1} when
the coefficients $\tilde{u}_j, u_j,v_j,\tilde{\gamma}_j$ satisfy
\begin{flalign}
    \omega_0 &= 1 + \frac{\eta}{s^2}, \quad \omega_1 = \frac{T_s'(\omega_0)}{T_s''(\omega_0)}, \nonumber\\
	b_j &= \frac{T_j''(\omega_0)}{(T_j'(\omega_0))^2}, \quad  2 \leq j\leq s, \quad b_0 =b_1 = b_2, \nonumber\\
	\tilde{u}_1 &= \omega_1b_1,   \quad\tilde{u}_j = 2\omega_1\frac{b_j}{b_{j - 1}}, \quad u_j = 2\omega_0\frac{b_j}{b_{j - 1}}, \nonumber\\
    \quad v_j &= -\frac{b_j}{b_{j - 2}},   \quad \tilde{\gamma}_j=-\big{(}1-b_{j-1}T_{j-1}(\omega_0)\big{)}\tilde{u}_j, \quad  2 \leq j\leq s,   \label{eq:2.4}
\end{flalign}
where $\eta$ is an undetermined damping parameter, and $T_j(\omega_0),~1 \leq j\leq s$ denote the first kind of Chebyshev polynomials about $\omega_0$.

\indent Applying the RKC method \eqref{eq:2.2} to the following linear stability test equation
\begin{flalign}
	y' = {\lambda} y, \quad \lambda\in \mathbb{C},	\label{eq:2.5}
\end{flalign}
 it is not difficult to show that
\begin{align}
    y_{n+1}&=R_s(p)y_n, \quad K_j=R_j(p)y_n, \quad 0 \leq j\leq s, \nonumber\\
	R_j(p) &= a_j+b_jT_j(\omega_0+\omega_1p), \quad a_j=1-b_jT_{j}(\omega_0), \quad p = \lambda h.\ \label{eq:2.6}\
\end{align}
When the damping parameter $\eta$ is set to $\frac{2}{13}$, we know from \cite{verwer1990} that
\begin{align}
    |R_s(p)|\leq 1,~~p\in [-Ls^2, 0],~~L\approx 0.65.\ \label{eq:2.7}\
\end{align}
The inequality \eqref{eq:2.7} indicates that the length of the stability domain along the negative real axis grows quadratically with $s$ for the RKC method \eqref{eq:2.2}.
For example, taking $s = 5, 15, 50$ in turn, we can obtain Figure \ref{fig:RKCwdy}, where the horizontal axis represents the real axis and the vertical axis is the imaginary axis.
\begin{figure}
	\centering
	\begin{minipage}[b]{0.30\textwidth}
		\includegraphics[width=\textwidth]{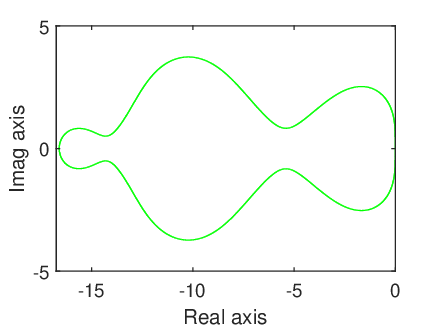} 	
	\end{minipage}
	\hfill 
	\begin{minipage}[b]{0.30\textwidth}
		\includegraphics[width=\textwidth]{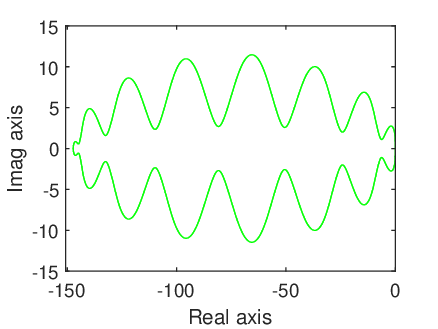}  
	\end{minipage}
	\hfill 
	\begin{minipage}[b]{0.30\textwidth}
		\includegraphics[width=\textwidth]{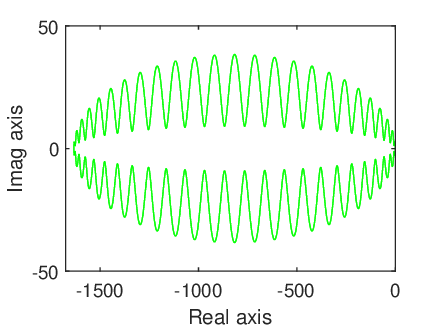}  
	\end{minipage}
	\caption{ Stability domains of the RKC method \eqref{eq:2.2}. $s = 5$ (left), $s = 15$ (middle), $s =50$ (right).}
	\label{fig:RKCwdy}
\end{figure}

\subsection{The PRKC methods}

In view of the advantage of RKC method \eqref{eq:2.2} in the linear stability, the author in reference \cite{zbinden2011} extended it to the ODE system \eqref{eq:1.1} and constructed the PRKC methods.

For the ODE system \eqref{eq:1.1}, the $s$-stage PRKC method can be defined as
\begin{flalign}
	K_{-1} &= y_n, \nonumber\\
	K_0 &= K_{-1} + \alpha_0 h f_{A}(K_{-1}),\nonumber \\
	K_1 &= K_0 + \tilde{u}_1 h f_{D}({K_0}),\nonumber \\
	K_j &= u_j K_{j-1} + v_j K_{j-2} + (1 - u_j - v_j)K_0 + \tilde{u}_j h f_D(K_{j-1}) + \tilde{\gamma}_j h f_D(K_0), \nonumber \\
	& ~~\quad j = 2, \dots, s - 1,\nonumber \\
	K_s &= u_s K_{s-1} + v_s K_{s-2} + (1 - u_s - v_s)K_0 + \tilde{u}_s h f_D(K_{s-1}) + \tilde{\gamma}_s h f_D(K_0) \nonumber \\
	& \quad + \alpha_1 h f_{A}(K_{-1}) + \alpha_2 h f_{A}(K_{0}) + \alpha_3 h f_{A}(K_{s-1}),\nonumber \\
	K_{s + 1} &= u_s K_{s-1} + v_s K_{s-2} + (1 - u_s - v_s)K_0 + \tilde{u}_s h f_D(K_{s-1}) + \tilde{\gamma}_s h f_D(K_0) \nonumber \\
	& \quad + \alpha_4 h f_{A}(K_{-1}) + \alpha_5 h f_{A}(K_{0}) + \alpha_6 h f_{A}(K_{s-1}) + \alpha_7 h f_{A}(K_{s}), \nonumber\\
	y_{n + 1} &= K_{s + 1}, \quad n = 0, 1, \dots, N - 1, \label{eq:2.8}
\end{flalign}
where $\alpha_0, \alpha_1, \cdots, \alpha_7\in\mathbb{R}$, and the coefficients $\tilde{u}_j, u_j,v_j,\tilde{\gamma}_j$ are defined by \eqref{eq:2.4}.
If $f_A\equiv0$, the method \eqref{eq:2.8} will degenerate into the standard RKC method.
If $f_D\equiv0$, the method \eqref{eq:2.8} will degenerate into the $3$-stage RK method
\begin{flalign}
	H_{1} &= y_n, \nonumber\\
	H_2 &= y_n + \alpha_0 h f_{A}(H_1),\nonumber \\
    H_3 &= y_n + (\alpha_0+\alpha_1) h f_{A}(H_1)+ (\alpha_2+\alpha_3) h f_{A}(H_2),\nonumber \\
    y_{n+1} &= y_n + (\alpha_0+\alpha_4) h f_{A}(H_1)+ (\alpha_5+\alpha_6) h f_{A}(H_2)+\alpha_7hf_{A}(H_3). \label{eq:2.9}
\end{flalign}
We know from \cite{zbinden2011} that the RK method \eqref{eq:2.9} is third-order consistent when
the coefficients $\alpha_0, \alpha_1, \cdots, \alpha_7$ satisfy
\begin{flalign}
	\alpha_0 &= \frac{1}{2},  \quad \alpha_1 = -\frac{1}{2} + r(3 - 4r),
	\quad  \alpha_2 = 2r(2r - 1) - \alpha_3, \nonumber \\
	\alpha_4 &= \frac{1 - 3r}{6r},\quad
	\alpha_5 = \frac{1 + 3r(1 - 2r) + 4c_{s - 1}r(3r - 2)}{6c_{s - 1}r(2r - 1)},\quad
	\alpha_6 = \frac{3r(2r - 1) - 1}{6c_{s - 1}r(2r - 1)}, \nonumber\\
	\alpha_7 &= \frac{1}{6r(2r - 1)}, \label{eq:2.10}
\end{flalign}
where $r\notin\{0, 1/2\}$, $\alpha_3$ are free parameters,
and $c_{s-1}$ can be obtained through recursive calculation as
\begin{align}	
	c_0 &= 0, \quad c_1 = \tilde{u}_1,\nonumber \\
	c_j &= u_j c_{j - 1} + v_j c_{j - 2} + \tilde{u}_j + \tilde{\gamma}_j, \quad 2 \leq j \leq s. \label{eq:2.11}
\end{align}

\indent Applying the PRKC method \eqref{eq:2.8} to the test equation
\begin{flalign}
	y' = {\lambda_1} y+i{\lambda_2} y, \quad \lambda_1, \lambda_2\in \mathbb{R},	\label{eq:2.12}
\end{flalign}
it is not difficult to show that
\begin{align}
    y_{n+1}&=\tilde{R}_s(p,q)y_n,  ~~~~p=\lambda_1 h,~q=\lambda_2 h,\nonumber\\
	\tilde{R}_s(p,q) &= R_s(p)\big{(}1+(\alpha_0+\alpha_7)iq+\alpha_0\alpha_7(iq)^2\big{)}  \nonumber\\
                        &~~~+R_{s-1}(p)\big{(}\alpha_6iq+(\alpha_0\alpha_6+\alpha_3\alpha_7)(iq)^2+\alpha_0\alpha_3\alpha_7(iq)^3\big{)} \nonumber\\
                        &~~~+(\alpha_4+\alpha_5)iq+(\alpha_0\alpha_5+(\alpha_1+\alpha_2)\alpha_7)(iq)^2+\alpha_0\alpha_2\alpha_7(iq)^3,
  \label{eq:2.13}\
\end{align}
where $R_{s-1}(p), R_s(p)$ are defined by \eqref{eq:2.6}.

\indent We know from \cite{zbinden2011} that $r=1$ and $\alpha_3=0$ are a suitable set of parameters, then we can obtain Figure \ref{fig:PRKCwdy} by
combining \eqref{eq:2.10} and \eqref{eq:2.13}.
It is not difficult to see from Figure \ref{fig:PRKCwdy} that the stability domain of
the PRKC method \eqref{eq:2.8} can cover a narrow rectangular domain.
According to \cite{zbinden2011}, the length of the rectangular domain can reach to $0.65s^2$, and its width is about twice $1.73$.
This means that the PRKC method \eqref{eq:2.8} inherits
 the stability advantage of the RKC method \eqref{eq:2.2}.
Note that the PRKC method \eqref{eq:2.8} requires only $4$ evaluations for $f_A$ at each step. Since $4$ is usually much smaller than $s$, for the ODE system \eqref{eq:1.1}, the calculate cost of PRKC method usually less than that of the standard RKC method at each step.
\begin{figure}
	\centering
	\begin{minipage}[b]{0.30\textwidth}
		\includegraphics[width=\textwidth]{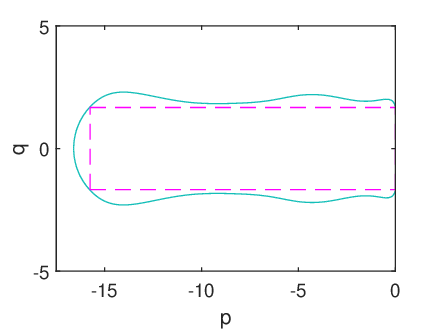} 	
	\end{minipage}
	\hfill 
	\begin{minipage}[b]{0.30\textwidth}
		\includegraphics[width=\textwidth]{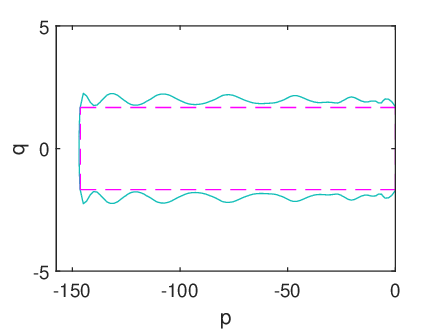}  
	\end{minipage}
	\hfill 
	\begin{minipage}[b]{0.30\textwidth}
		\includegraphics[width=\textwidth]{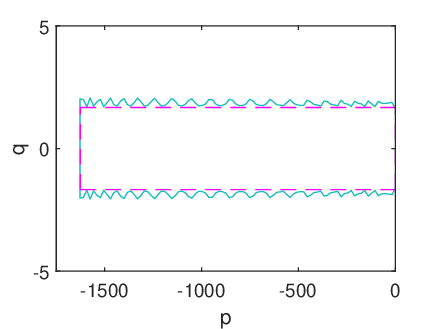}  
	\end{minipage}
	\caption{ Stability domains (internal area of the blue solid line) of the PRKC  method \eqref{eq:2.8} and the interior rectangulars (pink dashed line). $s = 5$ (left), $s = 15$ (middle), $s =50$ (right).}
	\label{fig:PRKCwdy}
\end{figure}

\subsection{The ARKC methods}

Based on the RKC methods, the author of \cite{almuslimani2023} recently proposed another class of partitioned explicit stabilized methods called the ARKC methods.

For the ODE system \eqref{eq:1.1}, the $s$-stage ARKC method can be defined as
\begin{flalign}
	G &= hf_A \left(y_n + \frac{h}{2} f_A \left(y_n + \frac{\omega_1}{2} h f_D(y_n)\right) + \frac{h}{2} f_D(y_n)\right) \nonumber \\
	&\quad + hf_D\left(y_n + \frac{\omega_1 - 1}{2} h f_A(y_n)\right)- hf_D(y_n), \nonumber \\
	K_0 &= y_n + \frac{\omega_1}{2} G, \nonumber \\
	K_1 &= K_0 + \tilde{u}_1 h f_D(y_n) + \hat{\alpha} G,  \nonumber \\
    K_j &= u_j K_{j-1} + v_j K_{j-2} + (1 - u_j - v_j)K_0 + \tilde{u}_j h \big{(}f_D(K_{j-1})-f_D(K_{0})+f_D(y_n)\big{)}  \nonumber \\
	& \quad + \tilde{\gamma}_j h f_D(y_n), ~~j = 2, \dots, s,\nonumber \\
	y_{n+1} &= K_s, \label{eq:2.14}
\end{flalign}
where $\hat{\alpha} = (1-\frac{\omega_1}{2})b_1s\omega_1$ and $\tilde{u}_j, u_j, v_j, \tilde{\gamma}_j, b_1, \omega_1$ are defined by \eqref{eq:2.4}.
The method \eqref{eq:2.14} also degenerates into the standard RKC method when $f_A\equiv0$.
If $f_D\equiv0$, the method \eqref{eq:2.14} will degenerate into the $2$-stage second-order RK method
\begin{flalign}
	H_{1} &= y_n, \nonumber\\
	H_2 &= y_n + \frac{1}{2} h f_{A}(H_1),\nonumber \\
    y_{n+1} &= y_n +hf_{A}(H_2). \label{eq:2.15}
\end{flalign}

According to reference \cite{almuslimani2023}, applying the method \eqref{eq:2.14} to test equation \eqref{eq:2.12} yields
\begin{align}
    y_{n+1}&=\hat{R}_s(p,q)y_n,  ~~~~p=\lambda_1 h,~q=\lambda_2 h,\nonumber\\
	\hat{R}_s(p,q) &= R_s(p)+\Big{(}\frac{\omega_1}{2}+\big{(}1-\frac{\omega_1}{2}\big{)}\frac{U_{s-1}(\omega_0+\omega_1p)}{U_{s-1}(\omega_0)}\Big{)} \Big{(}1+\frac{\omega_1}{2}p\Big{)}\Big{(}iq+\frac{1}{2}(iq)^2\Big{)},               \label{eq:2.16}\
\end{align}
where $U_{s-1}(\cdot)$ represents the second type of Chebyshev polynomial of degree $s-1$.
Based on \eqref{eq:2.16}, we can obtain Figure \ref{fig:ARKCwdy}.
It is not difficult to see from Figure \ref{fig:ARKCwdy} that the stability domain of
the ARKC method \eqref{eq:2.14} can cover a suitable elliptical domain.
For the ARKC method \eqref{eq:2.14}, the damping parameter $\eta$ no longer takes a fixed value, but changes with the variation of $s$ to make the covered elliptical domain more optimal (Please refer to reference \cite{almuslimani2023} for details).
\begin{figure}
	\centering
	\begin{minipage}[b]{0.30\textwidth}
		\includegraphics[width=\textwidth]{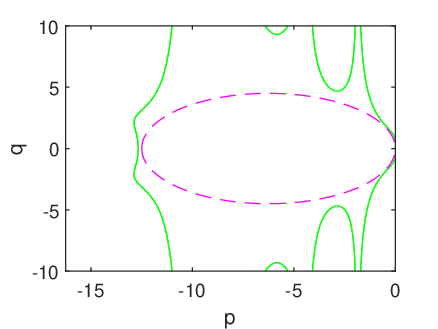} 	
	\end{minipage}
	\hfill 
	\begin{minipage}[b]{0.30\textwidth}
		\includegraphics[width=\textwidth]{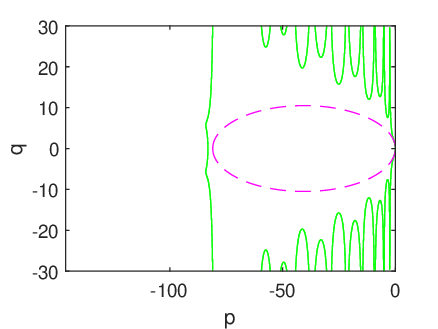}  
	\end{minipage}
	\hfill 
	\begin{minipage}[b]{0.30\textwidth}
		\includegraphics[width=\textwidth]{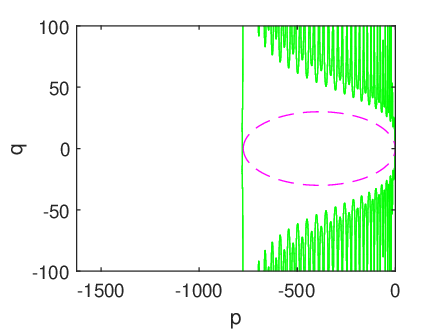}  
	\end{minipage}
	\caption{ Stability domains (internal area of the green solid line) of the ARKC method \eqref{eq:2.14} and the interior ellipses (pink dashed line). $s = 5,\eta = 4$ (left), $s = 15,\eta = 9$ (middle), $s =50,\eta = 13.5$ (right).}
	\label{fig:ARKCwdy}
\end{figure}

\subsection{The PIROCK methods}

Both the PRKC method \eqref{eq:2.8} and the ARKC method \eqref{eq:2.14} are constructed based on the RKC method \eqref{eq:2.2}.
In this subsection, we introduce the PIROCK methods \cite{abdulle2013},
which are constructed based on another type of second-order explicit stabilized methods called the ROCK2 methods \cite{abdulle2001}.
Because the ODE system \eqref{eq:1.1} does not involve extremely stiff terms, we mainly introduce the explicit version of the PIROCK methods after degradation in this subsection.

According to reference \cite{abdulle2001}, the $s$-stage ROCK2 method for the ODE system \eqref{eq:2.1} can be defined as
\begin{align}
	K_0 &= y_n, \quad K_1 = y_n + \alpha\mu_1 h f(y_n), \nonumber\\
	K_j &= \alpha\mu_j h f(K_{j - 1}) - \nu_j K_{j - 1} - \kappa_j K_{j - 2}, \quad j = 2, \dots, s - 2, \nonumber\\
	K^*_{s - 1} &= K_{s - 2} + \sigma_{\alpha} h f(K_{s - 2}),~~
	K^*_s = K^*_{s - 1} + \sigma_{\alpha} h f(K^*_{s - 1}),\nonumber \\
	y_{n+1} &= K^*_s - \sigma_{\alpha}(1 - \tau_{\alpha} / \sigma_{\alpha}^2)(h f(K^*_{s - 1}) - h f(K_{s - 2})), \label{eq:2.17}
\end{align}
where $\alpha\geq 1$ is the damping parameter, and $\sigma_{\alpha}, \tau_{\alpha}$ satisfy
\[
	\sigma_{\alpha}=\frac{1-\alpha}{2}+\alpha \sigma,~~\tau_{\alpha}=\frac{(\alpha-1)^2}{2}+2\alpha(1-\alpha)\sigma+\alpha^2\tau.
\]
For the ROCK2 method \eqref{eq:2.17}, the coefficients $\mu_j, \nu_j, \kappa_j$ and the parameters $\sigma, \tau$
no longer have analytical expressions like the coefficients in \eqref{eq:2.4}, but need to be calculated through specific numerical algorithms (Please refer to \cite{abdulle2001} for details).
Applying the ROCK2 method \eqref{eq:2.17} to the test equation \eqref{eq:2.5} yields
\begin{align}
    y_{n+1}&=\check{R}_s(\alpha p)y_n, \quad K_j=P_j(\alpha p)y_n,  ~~p=\lambda h,\nonumber\\
    P_0(\alpha p)&=1,~~P_1(\alpha p)=1+\alpha\mu_1 p, \nonumber\\
    P_j(\alpha p)&=(\alpha\mu_j p - \nu_j) P_{j-1}(\alpha p) - \kappa_j P_{j-2}(\alpha p), ~~2 \leq j\leq s-2, \nonumber\\
	\check{R}_s(\alpha p) &= (1+2\sigma_{\alpha}p+\tau_{\alpha}p^2)P_{s-2}(\alpha p).  \label{eq:2.18}\
\end{align}
When the damping parameter $\alpha=1$, we know from \cite{abdulle2001} that
$|\check{R}_s(\alpha p)|\leq 1,~p\in [-Ls^2, 0],~L\approx 0.81$.
This means that the length of the stability domain along the negative real axis is almost optimal for the second-order ROCK2 method \eqref{eq:2.17}.

For the ODE system \eqref{eq:1.1}, the PIROCK methods proposed in \cite{abdulle2013} can be defined as
\begin{flalign}
	K_0 &= y_n, \quad K_1 = y_n + \alpha\mu_1 h f_D(y_n), \nonumber\\
	K_j &= \alpha\mu_j h f_D(K_{j - 1}) - \nu_j K_{j - 1} - \kappa_j K_{j - 2}, \quad j = 2, \dots, s - 2 + \ell ~(\ell = 1 \text{ or } 2),\nonumber\\
	K^*_{s - 1} &= K_{s - 2} + \sigma_{\alpha} h f_D(K_{s - 2}),~~
	K^*_s = K^*_{s - 1} + \sigma_{\alpha} h f_D(K^*_{s - 1}),\nonumber \\
	K_{s + 1} &= K_{s - 2 + \ell}, ~~
	K_{s + 2} = K_{s + 1} + (1 - 2\gamma)h f_A(K_{s + 1}), \nonumber \\
	K_{s + 3} &= K_{s + 1} + \frac{1}{3}h f_A(K_{s + 1}),~~
	K_{s + 4} = K_{s + 1} + \frac{2\beta}{3}h f_D(K_{s + 1}) + \frac{2}{3}h  f_A(K_{s + 3}),\nonumber\\
	y_{n+1} &= K^*_s - \sigma_{\alpha}(1 - \tau_{\alpha} / \sigma_{\alpha}^2)(h f_D(K^*_{s - 1}) - h f_D(K_{s - 2})),\nonumber \\
	&\quad + \frac{1}{4}h f_A(K_{s + 1}) + \frac{3}{4}h f_A(K_{s + 4})  + \frac{1}{2 - 4\gamma}(h f_D(K_{s + 2}) - h f_D(K_{s + 1})), \label{eq:2.19}
\end{flalign}
where $\gamma=1-\sqrt{2}/2$ and $\beta=1-2\alpha P'_{s-2+\ell}(0)$ (Please refer to \eqref{eq:2.18} for the definition of polynomial $P_{s-2+\ell}$).
The method \eqref{eq:2.19} degenerates into the standard ROCK2 method when $f_A\equiv0$.
If $f_D\equiv0$, the method \eqref{eq:2.19} will degenerate into the $3$-stage third-order RK method
\begin{flalign}
	H_{1} &= y_n, \nonumber\\
	H_2 &= y_n + \frac{1}{3} h f_{A}(H_1),\nonumber \\
    H_3 &= y_n + \frac{2}{3} h f_{A}(H_2),\nonumber \\
    y_{n+1} &= y_n +\frac{1}{4}hf_{A}(H_1)+\frac{3}{4}hf_{A}(H_3). \label{eq:2.20}
\end{flalign}
Applying the method \eqref{eq:2.19} to test equation \eqref{eq:2.12} yields
\begin{align}
    y_{n+1}&=\bar{R}_s(\alpha p,q)y_n,  ~~~~p=\lambda_1 h,~q=\lambda_2 h,\nonumber\\
	\bar{R}_s(\alpha p,q) &= \check{R}_s(\alpha p)
+P_{s-2+\ell}(\alpha p)\Big{(}iq+\frac{1}{2}(iq)^2+\frac{1}{6}(iq)^3+\frac{1}{2}(\beta+1)p(iq)\Big{)}.              \label{eq:2.21}\
\end{align}
In reference \cite{abdulle2013}, two sets of choices are provided for the parameters $\alpha$ and $\ell$.
The first set of choices is $\alpha=1, \ell=2$, and the second set of choices is $\alpha=1/(2P'_{s-1}(0)), \ell=1$.
Specifically, taking the second set of parameters,
based on \eqref{eq:2.21}, we can obtain Figure \ref{fig:PIROCKwdy}.
It is not difficult to see from Figure \ref{fig:PIROCKwdy} that the stability domain of
the PIROCK method \eqref{eq:2.19} can also cover a suitable elliptical domain.
\begin{figure}
	\centering
	\begin{minipage}[b]{0.30\textwidth}
		\includegraphics[width=\textwidth]{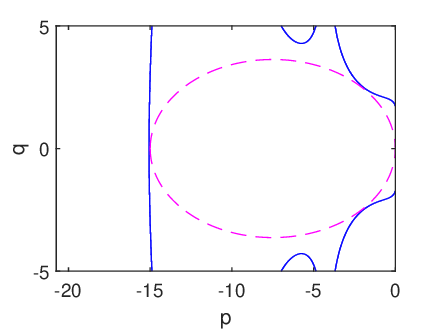} 	
	\end{minipage}
	\hfill 
	\begin{minipage}[b]{0.30\textwidth}
		\includegraphics[width=\textwidth]{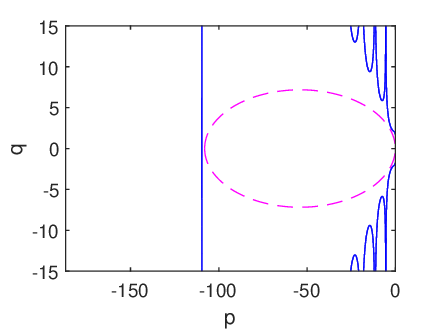}  
	\end{minipage}
	\hfill 
	\begin{minipage}[b]{0.30\textwidth}
		\includegraphics[width=\textwidth]{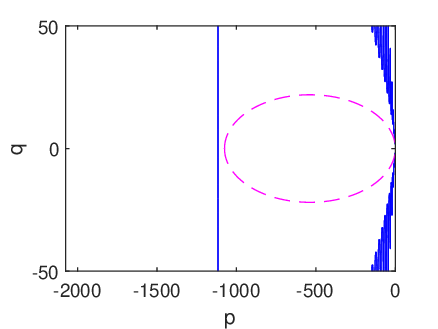}  
	\end{minipage}
	\caption{ Stability domains (internal area of the blue solid line) of the PIROCK method \eqref{eq:2.19} and the interior ellipses (pink dashed line). $s = 5$ (left), $s = 15$ (middle), $s =50$ (right).}
	\label{fig:PIROCKwdy}
\end{figure}

\section{A new class of partitioned explicit stabilized methods}
\label{sec.3}

For the three types of partitioned explicit stabilized methods reviewed in the previous section, the stage of the RK method for handling $f_A$ is small and fixed.
This results in significant limitation on the width of their stability domains along and near the imaginary axis (see Figure \ref{fig:PRKCwdy}-Figure \ref{fig:PIROCKwdy}).

In this section, we will propose a new class of partitioned explicit stabilized methods.
An important feature of our partitioned methods is that the stage of the RK method for handling $f_A$ can be flexibly adjusted according to the needs of the problem.
This feature greatly enhances the flexibility and applicability of our methods.

For the ODE system \eqref{eq:1.1}, our partitioned methods can be defined as
\begin{flalign}
	\hat{K}_0 &= y_n,~~\hat{K}_i=\hat{K}_{i-1}+\frac{1}{2m}hf_A(\hat{K}_{i-1}),~~i=1,2,\dots, m,\nonumber \\
	K_0 &= \hat{K}_m, \nonumber \\
	K_1 &= K_0 + \tilde{u}_1 h f_D(K_0),  \nonumber \\
    K_j &= u_j K_{j-1} + v_j K_{j-2} + (1 - u_j - v_j)K_0 + \tilde{u}_j h f_D(K_{j-1})
    + \tilde{\gamma}_j h f_D(K_0), ~~j = 2, \dots, s,\nonumber \\
    K_{s+3i-2}&=K_{s+3i-3}+ \frac{1}{6m}h f_A(K_{s+3i-3}),  \nonumber \\
    K_{s+3i-1}&=K_{s+3i-3}- \frac{1}{6m}h f_A(K_{s+3i-2}),  \nonumber \\
    K_{s+3i}&=K_{s+3i-3}+\frac{2}{m}h f_A(K_{s+3i-3})-\frac{3}{2m}h f_A(K_{s+3i-1}), ~~i=1,2,\dots, m, \nonumber \\
	y_{n+1} &= K_{s+3m}, \label{eq:3.1}
\end{flalign}
where the coefficients $\tilde{u}_j, u_j, v_j, \tilde{\gamma}_j$ are defined by \eqref{eq:2.4}, and $m$ is a positive integer to be determined.
The method \eqref{eq:3.1} degenerates into the standard RKC method when $f_A\equiv0$.
If $f_D\equiv0$, the method \eqref{eq:3.1} will degenerate into the $4m$-stage RK method
\begin{flalign}
	H_0 &= y_n,~~H_j=H_{j-1}+\frac{1}{2m}hf_A(H_{j-1}),~~j=1,2,\dots, m,\nonumber \\
    H_{m+3i-2}&=H_{m+3i-3}+ \frac{1}{6m}h f_A(H_{m+3i-3}),  \nonumber \\
    H_{m+3i-1}&=H_{m+3i-3}- \frac{1}{6m}h f_A(H_{m+3i-2}),  \nonumber \\
    H_{m+3i}&=H_{m+3i-3}+\frac{2}{m}h f_A(H_{m+3i-3})-\frac{3}{2m}h f_A(H_{m+3i-1}), ~~i=1,2,\dots, m, \nonumber \\
	y_{n+1} &= H_{4m}. \label{eq:3.2}
\end{flalign}
Specifically, if we take $m=1$, the method \eqref{eq:3.2} can be simplified into the following $4$-stage third order RK method
\begin{flalign}
	H_{1} &= y_n, \nonumber \\
	H_{2} &=y_n+\frac{1}{2}h f_A(H_{1}), \nonumber \\
	H_{3} &=y_n+\frac{1}{2}h f_A(H_{1})+ \frac{1}{6}h f_A(H_{2}), \nonumber \\
	H_{4} &=y_n+\frac{1}{2}h f_A(H_{1})- \frac{1}{6}h f_A(H_{3}), \nonumber \\
	y_{n+1} &=y_n+\frac{1}{2}h f_A(H_{1})+2h f_A(H_{2}) -\frac{3}{2}h f_A(H_{4}). \label{eq:3.3}
\end{flalign}

Applying our partitioned method \eqref{eq:3.1} to test equation \eqref{eq:2.12} yields
\begin{align}
    y_{n+1}&=\ddot{R}_{s,m}(p,q)y_n,  ~~~~p=\lambda_1 h,~q=\lambda_2 h,\nonumber\\
	\ddot{R}_{s,m}(p,q) &= \big{(}1+\frac{1}{2m}(iq)\big{)}^mR_s(p)\big{(}1+\frac{1}{2m}(iq)
                             +\frac{1}{4m^2}(iq)^2+\frac{1}{24m^3}(iq)^3\big{)}^m,              \label{eq:3.4}\
\end{align}
where $R_s(p)$ is the stability function of RKC method \eqref{eq:2.2}.
Based on the stability function $\ddot{R}_{s,m}(p,q)$ in \eqref{eq:3.4}, we can obtain the following stability result.

\begin{theorem}
\label{thm1}
The stability domain of the partitioned method \eqref{eq:3.1} can cover the following rectangular domain
\begin{align}
    S(s,m)=\Big{\{}(p,q)\in \mathbb{R}^2 \Big{|} -Ls^2\leq p\leq 0, ~-2.15m\leq q\leq 2.15m \Big{\}}, \label{eq:3.5}\
\end{align}
where $L\approx 0.65$ if the damping parameter $\eta=\frac{2}{13}$.
\end{theorem}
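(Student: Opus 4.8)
The plan is to exploit the fully factored form of the stability function $\ddot{R}_{s,m}(p,q)$ in \eqref{eq:3.4}. Stability on $S(s,m)$ means $|\ddot{R}_{s,m}(p,q)|\leq 1$, and taking moduli turns the product \eqref{eq:3.4} into a product of moduli in which $|R_s(p)|$ depends only on $p$ while the two remaining factors depend only on $q$; the two variables therefore decouple completely. For the horizontal direction, the estimate \eqref{eq:2.7} with $\eta=\frac{2}{13}$ already supplies $|R_s(p)|\leq 1$ on the whole interval $[-Ls^2,0]$ with $L\approx 0.65$, which is exactly the $p$-extent claimed in \eqref{eq:3.5}. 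Hence the whole problem reduces to showing that the $q$-dependent factor has modulus at most $1$ whenever $|q|\leq 2.15m$; and since that factor is an $m$-th power of a nonnegative base, it suffices to bound the base by $1$.

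For the $q$-direction I would introduce the scaled variable $u=q/m$, so that the target range becomes $|u|\leq 2.15$ independently of $m$, and compute the squared base
\begin{align}
    g(u) := \Big{|}1+\tfrac{iu}{2}\Big{|}^2\cdot\Big{(}\big{(}1-\tfrac{u^2}{4}\big{)}^2+\big{(}\tfrac{u}{2}-\tfrac{u^3}{24}\big{)}^2\Big{)}. \nonumber
\end{align}
Writing everything in terms of $v=u^2\geq 0$, the expansion collapses to the clean polynomial
\begin{align}
    g = 1-\tfrac{v^2}{24}+\tfrac{v^3}{144}+\tfrac{v^4}{2304}, \nonumber
\end{align}
so that the desired inequality $g\leq 1$ is equivalent, after factoring out $v^2>0$, to the single quadratic condition $v^2+16v-96\leq 0$.

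The last step is to locate the positive root of $v^2+16v-96$, namely $v^\star=-8+4\sqrt{10}\approx 4.649$; the quadratic is nonpositive precisely for $0\leq v\leq v^\star$. Since the claimed width corresponds to $v=(2.15)^2=4.6225<v^\star$, the bound holds with a small margin, which also explains the constant $2.15$ as a rounded-down value of $\sqrt{v^\star}\approx 2.156$. I do not expect a genuine obstacle: the $(p,q)$-decoupling is exact and the $p$-part is handed directly to \eqref{eq:2.7}, so the only real content is the elementary identity for $g$ and the reduction of the quartic to a quadratic via the factor $v^2$. The one place to be careful is the algebra of the expansion — in particular combining the $u^4$ coefficients $\tfrac{1}{16}-\tfrac{1}{24}=\tfrac{1}{48}$ and the ensuing $v^2,v^3$ coefficients — since a slip there would shift the admissible threshold $v^\star$ and hence the reported width $2.15m$.
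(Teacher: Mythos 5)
Your proposal is correct and follows essentially the same route as the paper's proof: factor $\ddot{R}_{s,m}(p,q)$ so that the $p$- and $q$-dependence decouple, dispose of the $p$-interval via \eqref{eq:2.7}, and reduce the $q$-bound to the case $m=1$ by the scaling $u=q/m$. The only difference is that where the paper simply asserts that \eqref{eq:3.9} "can be directly verified," you carry out that verification explicitly (your expansion $g=1-\tfrac{v^2}{24}+\tfrac{v^3}{144}+\tfrac{v^4}{2304}$ with $v=u^2$ and the resulting threshold $v^\star=-8+4\sqrt{10}\approx 4.649>(2.15)^2$ are correct), which makes the argument self-contained and also explains where the constant $2.15$ comes from.
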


\begin{proof}
To prove the above conclusion, we only need to prove
\begin{align}
    \big{|}\ddot{R}_{s,m}(p,q)\big{|}\leq 1,~~(p,q)\in S(s,m). \label{eq:3.6}\
\end{align}
Based on \eqref{eq:3.4}, we have
\begin{align}
    \big{|}\ddot{R}_{s,m}(p,q)\big{|}=\Big{|}R_s(p)\Big{|}\Big{|}\big{(}1+\frac{1}{2m}(iq)\big{)}\big{(}1+\frac{1}{2m}(iq)
                             +\frac{1}{4m^2}(iq)^2+\frac{1}{24m^3}(iq)^3\big{)}\Big{|}^m.  \label{eq:3.7}\
\end{align}
Therefore, if we can show that
\begin{align}
    \Big{|}R_s(p)\Big{|}&\leq 1,~~-Ls^2\leq p\leq 0, \nonumber\\
    \Big{|}\big{(}1+\frac{1}{2m}(iq)\big{)}\big{(}1+\frac{1}{2m}(iq)
                             +\frac{1}{4m^2}(iq)^2+\frac{1}{24m^3}(iq)^3\big{)}\Big{|}&\leq 1, ~~-2.15m\leq q\leq 2.15m,  \label{eq:3.8}\
\end{align}
then conclusion \eqref{eq:3.6} holds.
According to inequality \eqref{eq:2.7}, we know that the first inequality in \eqref{eq:3.8} holds.

Next, we will prove the second inequality in \eqref{eq:3.8}.
In fact, we can directly verify that
\begin{align}
    \Big{|}\big{(}1+\frac{1}{2}(iq)\big{)}\big{(}1+\frac{1}{2}(iq)
                             +\frac{1}{4^2}(iq)^2+\frac{1}{24^3}(iq)^3\big{)}\Big{|}&\leq 1, ~~-2.15\leq q\leq 2.15.  \label{eq:3.9}\
\end{align}
Therefore, we have
\begin{align}
    &\Big{|}\big{(}1+\frac{1}{2m}(iq)\big{)}\big{(}1+\frac{1}{2m}(iq)
                             +\frac{1}{4m^2}(iq)^2+\frac{1}{24m^3}(iq)^3\big{)}\Big{|}  \nonumber \\
      =&\Big{|}\big{(}1+\frac{1}{2}(i\frac{q}{m})\big{)}\big{(}1+\frac{1}{2}(i\frac{q}{m})
                             +\frac{1}{4}(i\frac{q}{m})^2+\frac{1}{24}(i\frac{q}{m})^3\big{)}\Big{|}
      \leq 1,                      ~~-2.15m\leq q\leq 2.15m.  \label{eq:3.10}\
\end{align}
The proof is complete.
\end{proof}
By taking some specific values for $s$ and $m$, we can obtain Figures \ref{fig:NPRKCwdy1} and \ref{fig:NPRKCwdy2},
which can more intuitively explain the conclusion of Theorem \ref{thm1}.
\begin{figure}
	\centering
	\begin{minipage}[b]{0.30\textwidth}
		\includegraphics[width=\textwidth]{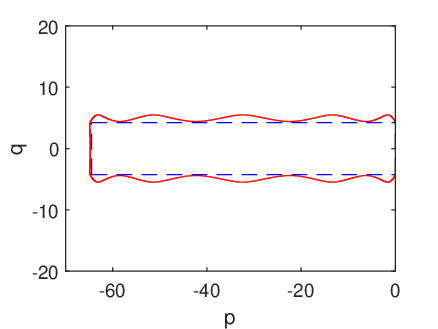} 	
	\end{minipage}
	\hfill 
	\begin{minipage}[b]{0.30\textwidth}
		\includegraphics[width=\textwidth]{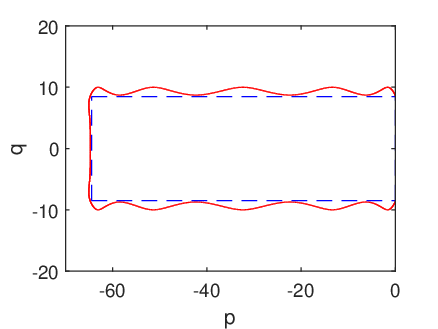}  
	\end{minipage}
	\hfill 
	\begin{minipage}[b]{0.30\textwidth}
		\includegraphics[width=\textwidth]{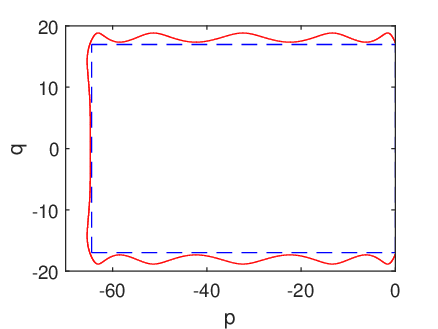}  
	\end{minipage}
	\caption{ Stability domains (internal area of the red solid line) of our method \eqref{eq:3.1} and the interior rectangulars (blue dashed line). $s = 10, m=2$ (left), $s = 10, m=4$ (middle), $s =10, m=8$ (right).}
	\label{fig:NPRKCwdy1}
\end{figure}

\begin{figure}
	\centering
	\begin{minipage}[b]{0.30\textwidth}
		\includegraphics[width=\textwidth]{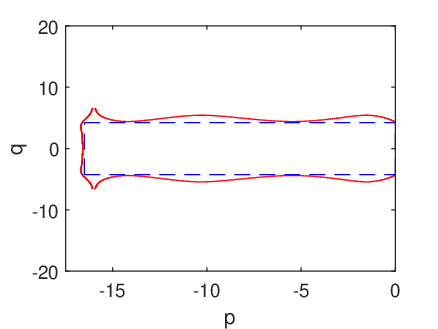} 	
	\end{minipage}
	\hfill 
	\begin{minipage}[b]{0.30\textwidth}
		\includegraphics[width=\textwidth]{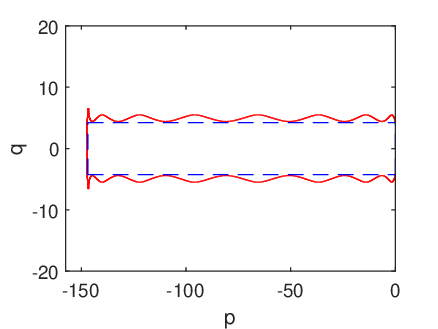}  
	\end{minipage}
	\hfill 
	\begin{minipage}[b]{0.30\textwidth}
		\includegraphics[width=\textwidth]{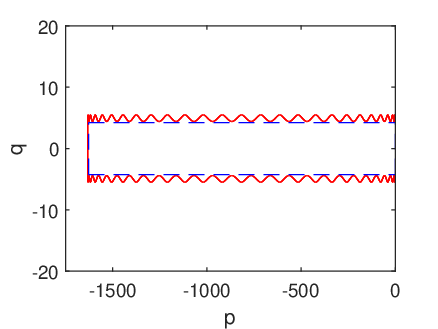}  
	\end{minipage}
	\caption{ Stability domains (internal area of the red solid line) of our method \eqref{eq:3.1} and the interior rectangulars (blue dashed line). $s = 5, m=2$ (left), $s = 15, m=2$ (middle), $s =50, m=2$ (right).}
	\label{fig:NPRKCwdy2}
\end{figure}

We next discuss the convergence result of our partitioned method \eqref{eq:3.1}.

\begin{theorem}
\label{thm2}
The partitioned method \eqref{eq:3.1} is second-order consistent for the ODE system \eqref{eq:1.1}.
\end{theorem}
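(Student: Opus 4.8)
The plan is to show that the one-step map $y_n \mapsto y_{n+1}$ defined by \eqref{eq:3.1} agrees with the exact flow of \eqref{eq:1.1} up to $O(h^3)$, which is precisely the condition for second-order consistency. Writing $F = f_D + f_A$, the exact solution satisfies $y(t_n+h) = y_n + hF + \frac{h^2}{2}F'F + O(h^3)$, where every function is evaluated at $y_n$; expanding $F'F$ produces the four elementary differentials $f_D'f_D$, $f_D'f_A$, $f_A'f_D$ and $f_A'f_A$, each with coefficient $\frac12$. The strategy is to Taylor-expand $y_{n+1}$ to the same order by passing successively through the three blocks of \eqref{eq:3.1} — the initial $f_A$-Euler sweep producing $K_0$, the RKC sweep producing $K_s$, and the terminal $f_A$ sweep producing $y_{n+1}=K_{s+3m}$ — and then to match the coefficients of $h$ and $h^2$.

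First I would analyze the two $f_A$ sweeps as $m$-fold compositions of a single elementary map. For the initial sweep, each step is an exact Euler step of size $h/(2m)$, and a short induction on the number of composed steps gives $K_0 = \hat{K}_m = y_n + \frac{h}{2}f_A(y_n) + \frac{m-1}{8m}h^2 f_A'(y_n)f_A(y_n) + O(h^3)$. For the terminal sweep, I would first expand a single three-stage block $K_{s+3i-3}\mapsto K_{s+3i}$, obtaining the per-block increment $Y + \frac{h}{2m}f_A(Y) + \frac{h^2}{4m^2}f_A'(Y)f_A(Y) + O(h^3)$, which is consistent with the cubic factor appearing in the linear stability function \eqref{eq:3.4}. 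Composing $m$ such blocks by the same induction yields a map $\Psi(Y,h) = Y + \frac{h}{2}f_A(Y) + \frac{m+1}{8m}h^2 f_A'(Y)f_A(Y) + O(h^3)$.

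Next I would treat the middle block. Since the stages $K_1,\dots,K_s$ in \eqref{eq:3.1} coincide with the standard RKC method \eqref{eq:2.2} started from $K_0$ instead of $y_n$, and since that method is second-order consistent for $y'=f_D$ under the coefficient choice \eqref{eq:2.4}, I may write $K_s = K_0 + hf_D(K_0) + \frac{h^2}{2}f_D'(K_0)f_D(K_0) + O(h^3)$. Re-expanding $f_D(K_0)$ and $f_D'(K_0)f_D(K_0)$ about $y_n$ via $K_0 - y_n = \frac{h}{2}f_A(y_n) + O(h^2)$ renders $K_s$ as an explicit $O(h^3)$ expansion in the elementary differentials at $y_n$; note that the RKC sweep introduces only $f_D'$-type terms, so the $f_A'f_A$ contribution $\frac{m-1}{8m}h^2$ carried from $K_0$ is unchanged. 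Substituting this $K_s$ into $y_{n+1}=\Psi(K_s,h)$ and once more expanding $f_A(K_s)$ and $f_A'(K_s)f_A(K_s)$ about $y_n$ gives the final expansion of $y_{n+1}$, from which I collect coefficients.

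I expect the bookkeeping of the single term $h^2 f_A'(y_n)f_A(y_n)$ to be the main obstacle, because it is the only elementary differential fed by all three sweeps and its intermediate coefficients $\frac{m-1}{8m}$ (from $K_0$) and $\frac{m+1}{8m}$ (from $\Psi$) depend on $m$. The crux is that these combine with the cross-term $\frac14$ arising from expanding $f_A(K_s)$ to give $\frac{m-1}{8m}+\frac{m+1}{8m}+\frac14 = \frac12$, so the $m$-dependence cancels and the coefficient matches the exact $\frac12 f_A'f_A$. The remaining second-order terms $\frac{h^2}{2}f_D'f_D$, $\frac{h^2}{2}f_D'f_A$ and $\frac{h^2}{2}f_A'f_D$, together with the first-order terms $hf_D$ and $hf_A$, each pass through only one or two sweeps and match directly, thereby establishing $y_{n+1}-y(t_n+h) = O(h^3)$ and completing the proof.
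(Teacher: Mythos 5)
Your proof is correct, and it takes a genuinely different route from the paper's. The paper invokes the principle that the consistency conditions up to order two are already determined by linear problems, and therefore only verifies that the two-parameter stability function $\ddot{R}_{s,m}(p,q)$ in \eqref{eq:3.4} expands as $1+p+(iq)+\frac{1}{2}\big(p+(iq)\big)^2+\mathcal{O}(h^3)$, using the binomial expansions of its three factors. You instead carry out the full nonlinear Taylor expansion, composing the three sweeps and tracking the four elementary differentials $f_D'f_D$, $f_D'f_A$, $f_A'f_D$, $f_A'f_A$ explicitly; your intermediate coefficients are all correct (the $\frac{m-1}{8m}h^2f_A'f_A$ from the initial Euler sweep, the per-block increment $\frac{h}{2m}f_A+\frac{h^2}{4m^2}f_A'f_A$ matching the cubic factor of \eqref{eq:3.4}, hence $\frac{m+1}{8m}$ after composing $m$ blocks), and the cancellation $\frac{m-1}{8m}+\frac{m+1}{8m}+\frac{1}{4}=\frac{1}{2}$ is exactly the right bookkeeping. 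Your approach buys something concrete: on the scalar linear test equation the two cross differentials $f_D'f_A$ and $f_A'f_D$ collapse to the same monomial $p\,(iq)$, so the paper's linear computation only certifies that the \emph{sum} of their coefficients equals $1$, whereas second-order consistency of a partitioned method requires each to equal $\frac{1}{2}$ separately. Your expansion verifies the two conditions individually (the $\frac{1}{2}f_D'f_A$ term arising from expanding $f_D(K_0)$ about $y_n$, the $\frac{1}{2}f_A'f_D$ term from expanding $f_A(K_s)$), so your argument is longer but more complete; the paper's is shorter but relies on a reduction that, as stated, does not by itself distinguish the two cross conditions.
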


\begin{proof}
Because the consistent conditions for nonlinear problems and linear problems within second order are the same, we only need to prove that
\begin{align}
    \ddot{R}_{s,m}(p,q)=1+p+(iq)+\frac{1}{2}p^2+\frac{1}{2}(iq)^2+\frac{1}{2}p(iq)+\frac{1}{2}(iq)p+\mathcal{O}(h^3). \label{eq:3.11}\
\end{align}
Because the RKC method \eqref{eq:2.2} is second-order consistent for the ODE system \eqref{eq:2.1},
we have
\begin{align}
    \big{(}1+\frac{1}{2m}(iq)\big{)}^m &=1+\frac{1}{2}(iq)+\frac{m(m-1)}{2}\frac{1}{4m^2}(iq)^2+\mathcal{O}(q^3),  \nonumber \\
    R_s(p) &=1+p+\frac{1}{2}p^2+\mathcal{O}(p^3),  \nonumber \\
    \big{(}1+\frac{1}{2m}(iq)+\frac{1}{4m^2}(iq)^2+\frac{1}{24m^3}(iq)^3\big{)}^m &=1+\frac{1}{2}(iq)+(\frac{m(m-1)}{2}+m)\frac{1}{4m^2}(iq)^2  \nonumber \\
    &~~~~+\mathcal{O}(q^3).  \label{eq:3.12}\
\end{align}
Note that $p=\mathcal{O}(h),~q=\mathcal{O}(h)$, therefore the conclusion \eqref{eq:3.11} follows from \eqref{eq:3.4} and \eqref{eq:3.12}.
The proof is complete.
\end{proof}

\begin{remark}
\label{rem1}
From the proof process of Theorem \ref{thm2}, it can be seen that our partitioned method \eqref{eq:3.1} is still second-order consistent
if we replace the RKC methods with other second-order explicit stabilized methods proposed in \cite{tang2020,abdulle2001,martin2009,kleefeld2013,meyer2014}.
In particular, if we replace the RKC methods with the ROCK2 methods, the coefficient $L$ in \eqref{eq:3.5} can reach to 0.81 (almost optimal value).
These extended analyses follow procedures analogous to those in the text, they are left to the interested readers.
\end{remark}

\section{Comparison of several types of partitioned explicit stabilized methods}
\label{sec.4}

In this section, we will compare our partitioned method \eqref{eq:3.1} with the three types of partitioned methods reviewed in the Section \ref{sec.2}.

In the case of $m=1$, the computational cost of our partitioned method \eqref{eq:3.1} in each step is the same as that of the PRKC method \eqref{eq:2.8},
and our method's stability domain is slightly better than that of PRKC method (see Figure \ref{fig:Com1}).
More importantly, the width of the stability domain along the imaginary axis can be expanded proportionally with $m$ for our partitioned method \eqref{eq:3.1}.
Therefore, compared with the PRKC method \eqref{eq:2.8}, our partitioned method \eqref{eq:3.1} has significant advantages in flexibility and applicability.
\begin{figure}[h]
	\centering
	\includegraphics[width=0.7\linewidth]{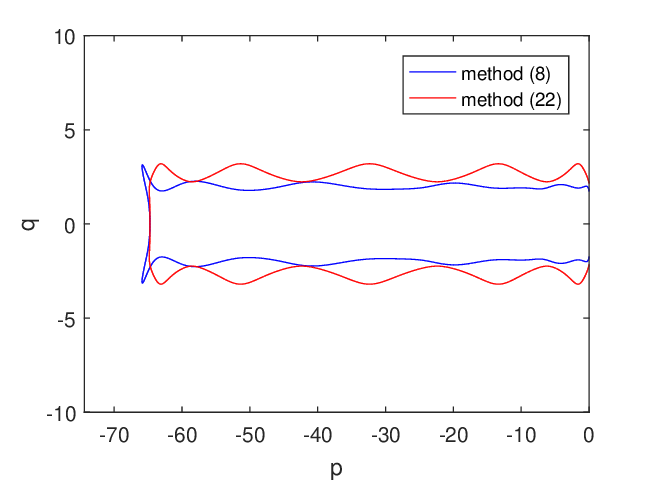}
	\caption{Comparison of the stability domains of the methods \eqref{eq:2.8} and \eqref{eq:3.1} for $s = 10, m=1$.}
	\label{fig:Com1}
\end{figure}

Next, we will focus on comparing partitioned methods \eqref{eq:2.14}, \eqref{eq:2.19} and \eqref{eq:3.1}. To make the comparison more intuitive, we will center the discussion around two concrete problems.

\subsection{Linear 1-dimensional advection-diffusion problem}

Consider the following linear advection-diffusion problem with periodic
boundary condition
\begin{flalign}
        w_t+Aw_x &=Dw_{xx}, \nonumber \\
        w(x,0) &=w_0(x), ~~x\in[0,1], \nonumber \\
        w(0, t) &= w(1,t), ~~t\in[0,T], \label{eq:4.1}
\end{flalign}
where $A$ and $D$ are two constant coefficients.
We discretize spatial variable $x$ with the grid size $h_x = \frac{1}{N}, N\in\mathbb{Z}^+$.
Using the second-order central difference formula to discretize the advection and diffusion terms yields
 \begin{flalign}
 	w_j^{\prime}(t) =  D\frac{w_{j-1}(t) - 2w_{j}(t) + w_{j+1}(t) }{h_x^2}+A  \frac{w_{j-1}(t) -w_{j+1}(t) }{2h_x},
 ~~j = 1,2,\cdots,N,   \label{eq:4.2}
 \end{flalign}
where $w_j(t)$ represents an approximate value of $w(x_j,t), ~x_j=jh_x$, and $w_0(t)=w_N(t),~w_{N+1}(t)=w_1(t)$.
Through Fourier analysis, it can be concluded that the eigenvalues of the coefficient matrix of equation  \eqref{eq:4.2} satisfy
\begin{align}
\lambda_k = \frac{2D}{h_x^2}(\cos(2k\pi h_x) - 1) - \frac{iA}{h_x}\sin(2k\pi h_x), \quad k = 1,\cdots,N. \label{eq:4.3}
\end{align}
The eigenvalues in \eqref{eq:4.3} are located on an ellipse in the left half-plane $\mathbb{C}^-$ (see Figure \ref{fig:Com2}).
From Figure \ref{fig:Com2}, it can be observed that
the slope of ellipse near the origin becomes progressively steeper as the Peclet number increases.

The stability domains of both the ARKC method \eqref{eq:2.14} and the PIROCK method \eqref{eq:2.19} can cover a suitable ellipse,
thus these two classes of methods are well-suited for solving the problem \eqref{eq:4.1} with appropriate Peclet number.
However, due to the evident limitations of their stability domains near the origin (see Figures \ref{fig:ARKCwdy} and \ref{fig:PIROCKwdy}),
both types of methods gradually become unsuitable for the problem \eqref{eq:4.1} as the Peclet number increases.

%

Because rectangles can cover their inscribed ellipses, our partitioned method \eqref{eq:3.1} can also solve the problem \eqref{eq:4.1} well.
Moreover, even if the Peclet number is relatively large, as long as appropriate $s$ and $m$ are selected, our method is still applicable.
This indicates that our method \eqref{eq:3.1} has a broader scope of application than the methods \eqref{eq:2.14} and \eqref{eq:2.19}.
Of course, in addition to the scope of application, computational efficiency is also an important focus that reflects the merits of a method.
Regarding the comparison of computational efficiency, we will leave it in the numerical experiment section.
In fact, the numerical results show that our method \eqref{eq:3.1} also performs very well in terms of computational efficiency (see Section \ref{sec.6} for details).
\begin{figure}[t]
	\centering
	\includegraphics[width=0.5\linewidth]{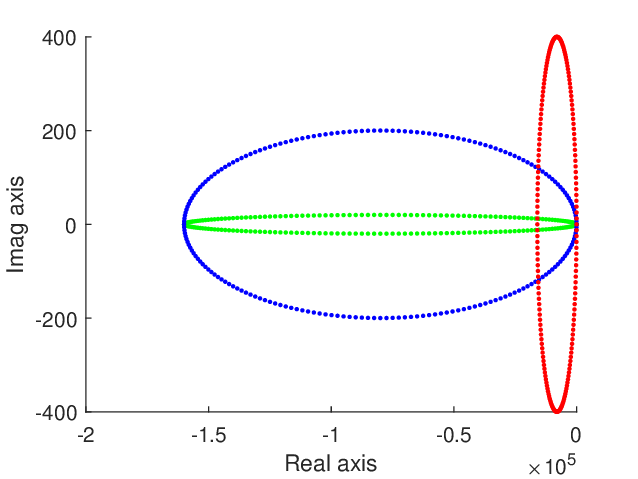}
	\caption{Distributions of eigenvalues in \eqref{eq:4.3}. $D=1, A = 0.1, N = 200$ (green dashed line), $D=1, A = 1, N = 200$ (blue dashed line), $D=0.1, A = 2, N = 200$ (red dashed line).}
	\label{fig:Com2}
\end{figure}

\subsection{ The damped wave problem}

Consider the 2-dimensional damped wave equation \cite{sommeijer2007}
\begin{flalign}
        Bw_t+w_{tt} &=A_1w_{xx}+A_2w_{yy}+D_1w_{txx}+D_2w_{tyy}+S(x,y),  ~~ x, y\in [0, 1],\label{eq:4.5}
\end{flalign}
where $S(x,y)$ is a source.
Let $\hat{w}=w_t$, then the equation \eqref{eq:4.5} can be rewritten as
\begin{flalign}
        w_t &=\hat{w},  \nonumber   \\
        \hat{w}_t &=-B\hat{w}+A_1w_{xx}+A_2w_{yy}+D_1\hat{w}_{xx}+D_2\hat{w}_{yy}+S. \label{eq:4.6}
\end{flalign}
Assuming that the coefficients $B, A_1, A_2, D_1, D_2$ are constants and the boundary conditions are periodic.
Similarly, by applying the second-order central difference formula to \eqref{eq:4.6}, a linear ODE system can be obtained, and the eigenvalues of the corresponding coefficient matrix satisfy
\begin{flalign}
	\lambda_{1,j_1, j_2} &= -\frac{1}{2}(\alpha_{D, j_1, j_2} + B) + \frac{1}{2}\sqrt{(\alpha_{D, j_1, j_2} + B)^2 - 4\alpha_{A, j_1, j_2}}, \nonumber \\
    \lambda_{2,j_1, j_2} &= -\frac{1}{2}(\alpha_{D, j_1, j_2} + B) - \frac{1}{2}\sqrt{(\alpha_{D, j_1, j_2} + B)^2 - 4\alpha_{A, j_1, j_2}},
 \nonumber \\
	\alpha_{A, j_1, j_2} &= \frac{4A_1}{h_x^2}\sin^2(j_1\pi h_x) + \frac{4A_2}{h_x^2}\sin^2(j_2\pi h_x),   \nonumber \\
	\alpha_{D, j_1, j_2} &= \frac{4D_1}{h_x^2}\sin^2(j_1\pi h_x) + \frac{4D_2}{h_x^2}\sin^2(j_2\pi h_x),
    \quad j_1, j_2\in\{1,2,\cdots, N\}. \label{eq:4.7}
\end{flalign}

Based on \eqref{eq:4.7}, taking $B=0, A_1=0.05, A_2=15, D_1=D_2=0.1$, then we can obtain Figure \ref{fig:Com3},
which shows that the damped wave problem \eqref{eq:4.5} demands high requirements on the width of the method's stability domain near the imaginary axis.
Because the stability domains of the methods \eqref{eq:2.14} and \eqref{eq:2.19} have a narrow width near the imaginary axis, these two types of methods are not suitable for solving the damped wave equation \eqref{eq:4.5}.
Fortunately, the stability domain of our methods approximates a rectangular domain with adjustable length and width, making it naturally suitable for solving the damped wave problem \eqref{eq:4.5}.

\begin{figure}[t]
	\centering
	\includegraphics[width=0.5\linewidth]{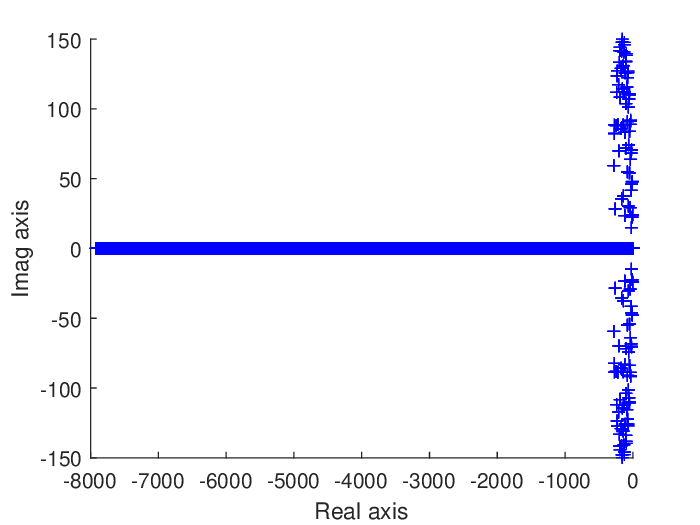}
	\caption{Distribution of eigenvalues in \eqref{eq:4.7} with $B=0, A_1=0.05, A_2=15, D_1=D_2=0.1$.}
	\label{fig:Com3}
\end{figure}

\section{Control of time step size and selection of $s$ and $m$}
\label{sec.5}

A good posterior error estimation is crucial for adaptive step size selection.
Therefore, we first introduce how to obtain an appropriate posterior error.
Inspired by reference \cite{zbinden2011}, we first give two separate error estimators for $f_D$ and $f_A$, and then determine the final posteriori error.

For the error estimator about $f_D$, we will introduce two strategies.
Inspired by reference \cite{sommeijer1998},
the first error estimation strategy can be rewritten as
\begin{flalign}
err_D = \frac{1}{15}\Big{(}12(K_0-K_s)+6h \big{(}F_D(K_0)+F_D(K_s)\big{)}\Big{)}, \label{eq:5.1}
\end{flalign}
where $K_0$ and $K_s$ are defined by method \eqref{eq:3.1}.
Using the above estimation, an additional evaluation $F_D(K_s)$ is required.
According to \cite{sommeijer1998}, the estimation $err_D$ is approximately equal to the local error of
the second-order RKC methods, thus we have $err_D=\mathcal{O}(h^3)$.
This error estimation tends to be saturated, making it prone to result in the final accuracy of adaptive methods falling short of the given precision requirements (see \cite{zbinden2011,kleefeld2013} and Section \ref{sec.6}).

The second error estimation strategy is a new strategy proposed by us.
This strategy use the idea of embedded methods.
We first construct the following embedded method
\begin{flalign}
	\tilde{K}_s= (1-\frac{1}{b_{s_1} T_{s_1}'(\omega_0)\omega_1})K_0
   +\frac{1}{b_{s_1} T_{s_1}'(\omega_0)\omega_1} K_{s_1}, \label{eq:5.2}
\end{flalign}
where $s_1 = \lfloor \frac{4}{5}s \rfloor$ ($\lfloor \cdot \rfloor$ is a down rounding function), $K_0$ and $K_{s_1}$  are defined by method \eqref{eq:3.1}.
Next, we give the second error estimation strategy, which can be defined as
\begin{flalign}
	\widetilde{err}_D= K_s-\tilde{K}_s. \label{eq:5.3}
\end{flalign}
It is easy to verify that the embedded method \eqref{eq:5.2} is first-order consistent when $f_A\equiv 0$.
This means $\widetilde{err}_D=\mathcal{O}(h^2)$.

For the error estimator about $f_A$, we also use the idea of embedded methods.
We construct the following embedded method
\begin{flalign}
	K^{*}_0 &= K_s,\nonumber \\
	K^{*}_{i} &= K^{*}_{i-1}-\frac{1}{m}h F_A(K_{s+3i-3})+\frac{3}{2m}h F_A(K_{s+3i-2}), \quad  1 \leq \ i \leq m, \nonumber \\
	\tilde{y}_{n+1} &=K^{*}_{m+1},  \label{eq:5.4}
\end{flalign}
where $K_s, K_{s+3i-3}, K_{s+3i-2}$ are defined by method \eqref{eq:3.1}.
It is not difficult to verify that the embedded method \eqref{eq:5.4} is also second-order consistent for the ODE \eqref{eq:1.1}.
Therefore, if take
\begin{flalign}
	err_A= y_{n+1}-\tilde{y}_{n+1}, \label{eq:5.5}
\end{flalign}
then we have $err_A=\mathcal{O}(h^3)$.

Based on estimations $err_D, \widetilde{err}_D$ and $err_A$, we can define the posterior error of method \eqref{eq:3.1} as
\begin{flalign}
	err= \max (\|err_D\|,~\|err_A\|), \label{eq:5.6}
\end{flalign}
or
\begin{flalign}
	err= \max (\|\widetilde{err}_D\|,~\|err_A\|^{2/3}). \label{eq:5.7}
\end{flalign}
For the time step size control, we use the strategy 
\begin{flalign}
	h_{new}= fac\cdot h\cdot (tol/err)^{1/p}, \label{eq:5.8}
\end{flalign}
where $fac=0.8$ is a safe factor, $tol$ is the specified tolerance,
$p=3$ if $err$ is defined by \eqref{eq:5.6} and $p=2$ if $err$ is defined by \eqref{eq:5.7}.

After the time step size $h$ is determined, we can proceed to select $s$ and $m$.
Specifically, based on Theorem \ref{thm1}, $s$ and $m$ can be determined by
\begin{flalign}
	s = \left\lceil \sqrt{\frac{h}{0.65} \rho_D+1} ~\right\rceil ,\qquad \qquad m = \left\lceil {\frac{h}{2.15} \rho_A}  \right\rceil,  \label{eq:5.9}
\end{flalign}
where $\lceil \cdot \rceil$ is an upward rounding function, $\rho_D$ and $\rho_A$
represent the spectral radii of the Jacobian matrices of functions $f_D$ and $f_A$, respectively.

\section{Numerical results}
\label{sec.6}

In this section, we will perform a numerical comparison of the methods introduced earlier through several specific examples. For ease of description, our partitioned method \eqref{eq:3.1} will be abbreviated as NPRKC method. In particular, when posterior error estimation \eqref{eq:5.6} is used, the method \eqref{eq:3.1} is called NPRKC1 method, whereas when posterior error estimation \eqref{eq:5.7} is used, the method \eqref{eq:3.1} is called NPRKC2 method.

The computational complexity of several methods is given in Table \ref{tab:1},
where $NfD$ denotes the number of evaluations of $f_D$ in each step and
$NfA$ denotes the number of evaluations of $f_A$ in each step.
\begin{table}
	\centering
	\caption{~Computational complexity }
	\label{tab:1}
	\begin{tabular}{lcc}
		\hline
		methods & ~~$NfD$~~ & ~~$NfA$~~\\
		\hline
        RKC & s & s \\
		PRKC & s & 4 \\
        ARKC & s+2 & 3 \\
		PIROCK & s+2+$\ell$ & 3 \\
		NPRKC1 & s & 4m \\
		NPRKC2 & s & 4m \\
		\hline
	\end{tabular}
\end{table}

For all the examples presented in this section, we have employed the second-order central difference formula for spatial semi-discretization.
Unless otherwise specified, we denote the part arising from spatial semi-discretization of the diffusion terms as $f_D$, and the remaining part as $f_A$.
For problems lacking exact solutions, we adopt the numerical solution computed by the DOPRI5 method \cite{hairer1993} with small step size $h=2^{-18}$ as the reference solution.

\begin{example}
\label{ex1}
For the first example, we consider the 2-dimensional damped wave equation \eqref{eq:4.5}.
Assuming that the equation is defined on the unit square and satisfies zero-flux boundary conditions.
Let $ t\in[0, ~0.75], ~w(x,y,0)=w_t(x,y,0)=0, ~B=0,  ~A_1=0.05, ~A_2=15$, and
\begin{flalign}
	D_1=D_2=D(x,y) &= 0.1 \mathrm{e}^{-100((x - 0.25)^2 + (y - 0.25)^2)}, \nonumber \\
	S(x,y) &= 100 \mathrm{e}^{-500((x - 0.75)^2 + (y - 1)^2)} + 100 \mathrm{e}^{-500((x - 0.25)^2 + (y - 1)^2)}. \label{eq:6.1}
\end{flalign}
Though the coefficients $D_1$ and $D_2$ are no longer constant in this case, the pattern of eigenvalue distribution is still similar to that shown in Figure \ref{fig:Com3}.
In this example, we denote the part obtained from the semi-discretization of $D_1\hat{w}_{xx}+D_2\hat{w}_{yy}$ in \eqref{eq:4.6} as $f_D$, while designating the remaining part as $f_A$.
Then, it can be seen from \cite{sommeijer2007} that
\begin{flalign}
	\rho_A\leq 2N\sqrt{A_1+A_2},~~~~\rho_D\leq 8N^2Q+B, ~~~~Q=\max\limits_{0\leq x, ~y\leq 1}D(x,y). \label{eq:6.2}
\end{flalign}
\end{example}

Since the intersection of the stability domains of the methods RKC and ARKC with the imaginary axis is only the origin, these two types of methods cannot be applied to solve the current example.
In this example, we mainly compare the methods PRKC, PIROCK and NPRKC.
For a more intuitive comparison of the stability of several methods on the imaginary axis and its vicinity, we first consider the case of fixed step size.
Let $N=100$, then the corresponding numerical results are shown in Figures \ref{fig:wavezj}-\ref{fig:NPRKCsloveWave}.
As shown in Figures \ref{fig:PRKCsloveWave} and \ref{fig:PIROCKsloveWave}, the methods PRKC and PIROCK suffer from strict step size restriction.
Fortunately, Figure \ref{fig:NPRKCsloveWave} shows that our NPRKC method can effectively avoid the step size restriction problem.
In addition, by comparing the middle subfigures of Figures \ref{fig:PRKCsloveWave}-\ref{fig:NPRKCsloveWave}, it can be observed that even when $m=1$,
the performance of our NPRKC method is superior to that of both the PRKC and PIROCK methods.

Next, we will consider the comparison of variable step sizes.
Let $Err$ represent numerical error, $Nac(Nre)$ denote the number of acceptance (rejection) steps, and $NfDe,~NfAe$ denote the counts of computations for $f_D,~f_A$, respectively.
Take $N=100$ and $tol=10^{-1}, 10^{-2}, \cdots, 10^{-5}$ in turn, then we can obtain Table \ref{tab:2}.

From Table \ref{tab:2}, it can be observed that when $tol\leq 10^{-4}$, the methods PRKC and PIROCK have significantly more computational steps than our NPRKC methods.
This once again confirms the step size restriction issue of the methods PRKC and PIROCK.
Comparing the methods NPRKC1 and NPRKC2, it can be concluded that the numerical results of the NPRKC2 method used posterior error estimation \eqref{eq:5.7} are more reliable as its errors are consistently smaller than the specified tolerance $tol$.
Comparing the methods PRKC, PIROCK and NPRKC2, it is not difficult to find that our NPRKC2 method has a lower computational cost while meeting the accuracy requirements.

\begin{figure}[H]
	\centering
	\includegraphics[width=0.35\linewidth]{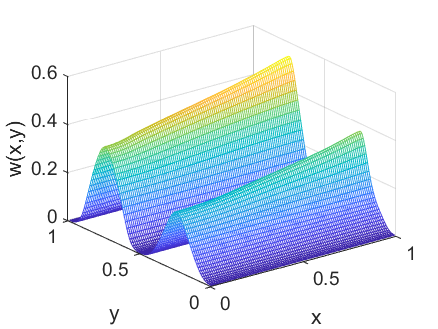}
	\caption{Reference solution of the Example \ref{ex1} at $t=0.75$.}
	\label{fig:wavezj}
\end{figure}

\begin{figure}[H]
	\centering
	\begin{minipage}[b]{0.30\textwidth}
		\includegraphics[width=\textwidth]{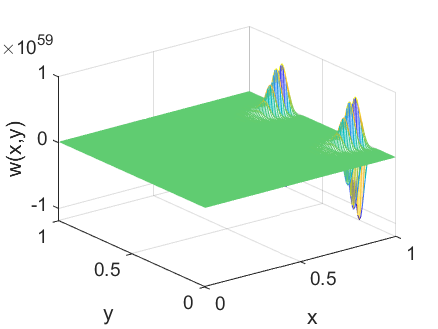} 	
	\end{minipage}
	\hfill 
	\begin{minipage}[b]{0.30\textwidth}
		\includegraphics[width=\textwidth]{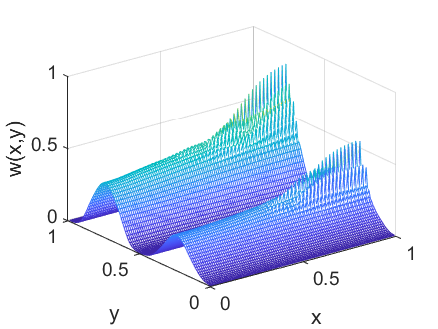}  
	\end{minipage}
	\hfill 
	\begin{minipage}[b]{0.30\textwidth}
		\includegraphics[width=\textwidth]{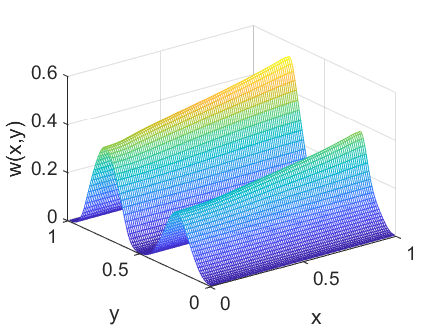}  
	\end{minipage}
	\caption{ Numerical solutions of the PRKC method for the Example \ref{ex1}. $h=1/30, s=22$ (left), $h=1/400, s=7$ (middle) and $h=1/450, s=7$ (right).}
	\label{fig:PRKCsloveWave}
\end{figure}

\begin{figure}[H]
	\centering
	\begin{minipage}[b]{0.30\textwidth}
		\includegraphics[width=\textwidth]{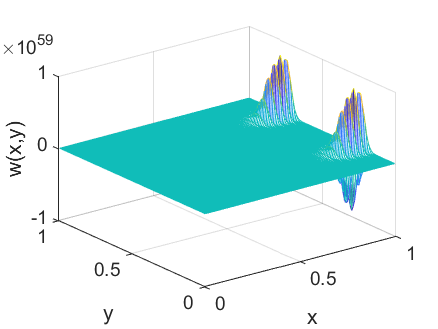} 	
	\end{minipage}
	\hfill 
	\begin{minipage}[b]{0.30\textwidth}
		\includegraphics[width=\textwidth]{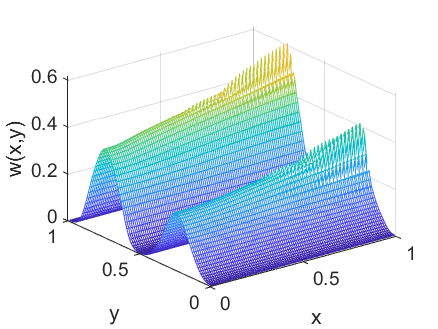}  
	\end{minipage}
	\hfill 
	\begin{minipage}[b]{0.30\textwidth}
		\includegraphics[width=\textwidth]{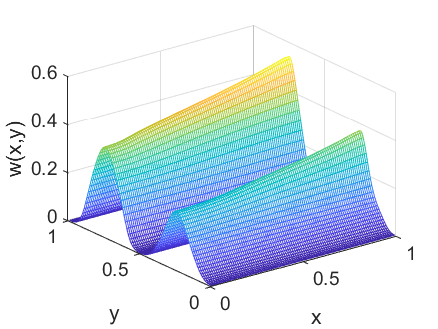}  
	\end{minipage}
	\caption{ Numerical solutions of the PIROCK method for the Example \ref{ex1}. $h=1/30, s=27$ (left), $h=1/400, s=8$ (middle) and $h=1/450, s=8$ (right).}
	\label{fig:PIROCKsloveWave}
\end{figure}

\begin{figure}[H]
	\centering
	\begin{minipage}[b]{0.30\textwidth}
		\includegraphics[width=\textwidth]{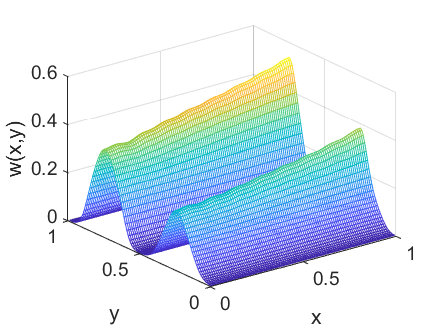} 	
	\end{minipage}
	\hfill 
	\begin{minipage}[b]{0.30\textwidth}
		\includegraphics[width=\textwidth]{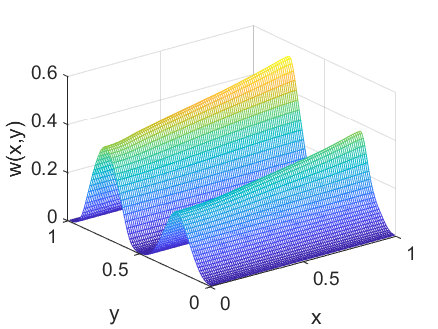}  
	\end{minipage}
	\hfill 
	\begin{minipage}[b]{0.30\textwidth}
		\includegraphics[width=\textwidth]{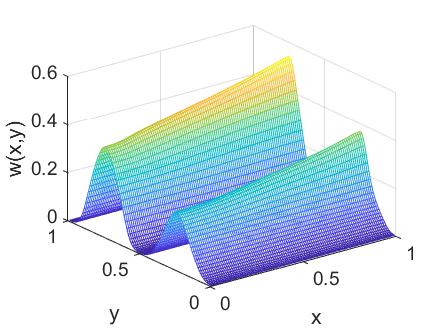}  
	\end{minipage}
	\caption{ Numerical solutions of our NPRKC method for the Example \ref{ex1}. $h=1/30, s=22, m=13$ (left), $h=1/400, s=7, m=1$ (middle) and $h=1/450, s=7, m=1$ (right).}
	\label{fig:NPRKCsloveWave}
\end{figure}

\begin{table}[t]
	\small
	\setlength{\tabcolsep}{3pt}
	\centering
	\caption{The results of the Example \ref{ex1}.}
	\label{tab:2}
	\begin{tabular}{lccccccc}
			\hline
			tol & method & $Err$ & $Nac(Nre)$  & $NfDe$ &$NfAe$ &$ NfDe+NfAe $\\
			\hline
			$10^{-1}$
			& PRKC & 2.8828e--05 & 343(0) &2398 &1372 & 3770\\
			&PIROCK & 2.9931e+07  & 340(7) &3792 &1191 & 4983\\
			& NPRKC1 &2.0393e--03  & 14(32) &439 &1760 & 2199\\
			& NPRKC2 & 6.3715e--02  & 8(2) &358 &1868 & 2226\\
			$10^{-2}$
			& PRKC & 2.8828e--05  & 343(0) &2398 &1372& 3770\\
			&PIROCK & 8.9671e--04  & 309(2) &3265 &933& 4198\\
			& NPRKC1 & 2.0558e--03 & 48(5) &758 &1516 & 2284\\
			& NPRKC2 & 2.6174e--03  & 34(8) &756 &1612 & 2368\\
			$10^{-3}$
			& PRKC & 2.8828e--05  & 343(0) &2398 &1372& 3770\\
			&PIROCK & 5.6145e--04 & 318(4) &3400 &966 & 4366\\
			& NPRKC1 & 5.5325e--04  & 71(1) &839&1276& 2115\\
			& NPRKC2 & 5.1916e--04  & 69(1) &816 &1236 & 2052\\
			$10^{-4}$
			& PRKC & 2.1884e--05  & 389(0) &3412 &1556 & 3968\\
			&PIROCK & 1.7443e--05  & 321(3) &3371 &972 & 4343\\
			& NPRKC1 & 1.2598e--04  & 158(0) &1275 &1830 & 2655\\
			& NPRKC2 & 5.3196e--05  & 210(1) &1464 &1404 & 2868\\
			$10^{-5}$
			& PRKC & 6.5451e--06  & 797(1) &3980 &3192 & 7172\\
			&PIROCK & 7.3758e--06   & 527(1) &4725 &1584 & 6309\\
			& NPRKC1 & 1.6935e--05   & 518(3) &2608 &2096 & 4704\\
			& NPRKC2 & 7.7131e--06   & 697(6) &2924 &2824 & 5748\\
			\hline
		\end{tabular}
	
\end{table}

\begin{example}
\label{ex2}
In this example, we consider the advection-diffusion equation \eqref{eq:4.1}.
We readily obtain that $\rho_D=4|D|N^2$ and $\rho_A=|A|N$.
Let $T=\frac{1}{10},~N=200$ and $w_0(x)=sin(2\pi x)$.
The corresponding numerical results are shown in Tables \ref{tab:3}-\ref{tab:5}.

As shown in Table \ref{tab:3}, for the case of small Peclet number,
all partitioned methods PRKC, ARKC, PIROCK and NPRKC perform well and outperform the standard RKC method.
However, as the Peclet number increases, the PRKC method begins to encounter the problem of step size restriction
(see the results of Tables \ref{tab:4} and \ref{tab:5} with $tol=10^{-2}$).
In addition, when the Peclet number reaches a certain level, the methods ARKC and PIROCK also become unsuitable.
Their errors significantly exceeded the specified tolerance, particularly when $tol=10^{-5}$ (see Table \ref{tab:5}).
Fortunately, our methods NPRKC1 and NPRKC2 perform well in all situations.

\begin{table}[H]
	\small
	\setlength{\tabcolsep}{3pt}
	\centering
	\caption{The results of the Example \ref{ex2} with $A = 0.1, D = 1$.}
	\label{tab:3}
	\begin{tabular}{lccccccc}
		\hline
		tol & method & $Err$  & $Nac(Nre)$ & $NfDe$ &$NfAe$ &$ NfDe+NfAe $\\
		\hline
		$10^{-2}$ & RKC & 2.1589e--03 & 8(0) & 426 & 426 &858 \\
		& PRKC & 2.1550e--03  & 8(0) & 434 & 32& 466 \\
		 &PIROCK & 1.0015e--03 & 8(0)& 545 &24  &569  \\
		 & ARKC & 1.9582e--03  & 9(0) & 489 & 18 & 509 \\
		 & NPRKC1 & 2.1550e--03  & 8(0) & 434 & 32 & 466\\
		  & NPRKC2 & 1.2977e--03  & 10(0) & 491 & 40 & 531\\
	$10^{-5}$ & RKC & 2.6849e--05  & 54(0) & 1166 & 1166 & 2332 \\
	& PRKC & 2.6832e--05  & 54(0) & 1221 & 216 & 1437 \\
	&PIROCK &2.4597e--06  & 160(1) & 2972 &483 &3455\\
	& ARKC & 2.1437e--05  & 60(0) & 1420 & 120 & 1440 \\
	& NPRKC1 & 2.6832e--05  & 54(0) & 1221 & 216 & 1437 \\
	& NPRKC2 & 2.1540e--06  & 229(1) & 2655 & 920 & 3575 \\
		\hline
	\end{tabular}
	
\end{table}
\vspace{-0.5cm}
\begin{table}[H]
	\small
	\setlength{\tabcolsep}{3pt}
	\centering
	\caption{The results of the Example \ref{ex2} with $A = 5, D= 1$.}
	\label{tab:4}
	\begin{tabular}{lccccccc}
		\hline
		tol & method & $Err$  & $Nac(Nre)$ & $NfDe$ &$NfAe$ &$ NfDe+NfAe $\\
		\hline
		$10^{-2}$ & RKC & 2.5539e--03  & 10(0) & 481 & 481 &962 \\
		& PRKC & 2.3288e--05  &60(0) & 1304 & 240 & 1544\\
		&PIROCK &5.3304e--03  & 8(0)& 546 &24 &570\\
		& ARKC & 1.2535e--03  & 8(1) & 805 & 18 & 823 \\
		& NPRKCA1 &2.1688e--03  & 8(0) & 426 & 192 & 622\\
		& NPRKCA2& 1.3058e--03  & 10(0) & 491 & 200 & 691\\
		$10^{-5}$ & RKC & 3.3985e--05  & 70(0) & 1329 & 1329 & 2658 \\
		& PRKC & 1.8828e--05  & 69(0) & 1402 & 276& 1678 \\
		&PIROCK &1.1730e--05   & 160(1) & 2972 &483 & 3455\\
		& ARKC & 1.8720e--05  & 51(33) & 2967 & 168 & 3135 \\
		& NPRKCA1 & 2.6743e--05  & 54(0) & 1167 & 272 & 1439 \\
		& NPRKCA2 & 2.1452e--06  & 229(1) & 2655 & 920 & 3575 \\
		\hline
	\end{tabular}
\end{table}
\vspace{-0.5cm}
\begin{table}[H]
	\small
	\setlength{\tabcolsep}{3pt}
	\centering
	\caption{The results of the Example \ref{ex2} with $A = 5, D = 0.2$.}
	\label{tab:5}
	\begin{tabular}{lccccccc}
		\hline
		tol & method & $Err$  & $Nac(Nre)$ &  $NfDe$ &$NfAe$ &$ NfDe+NfAe $\\
		\hline
		$10^{-2}$ & RKC & 1.4227e--02  & 9(0) & 207 & 207 &414 \\
		& PRKC & 18344e--04  &60(0) & 653 & 240& 893 \\
		&PIROCK &4.6558e--02    &7(0)  &229&21 &250 \\
		& ARKC & 2.2955e--02  & 10(0) & 335& 20 & 355 \\
		& NPRKC1 & 3.0295e--03  & 5(0) & 142 & 196 & 338\\
		& NPRKC2 & 1.1741e--03  & 5(0) & 148 & 192 & 340\\
		$10^{-5}$ & RKC & 1.7369e--04  & 67(0) & 604& 604 & 1208 \\
		& PRKC &7.5495e--05  & 92(0) & 822 & 368 & 1191 \\
		&PIROCK &2.1616e--04 & 56(0)& 850 &168 &1018\\
		& ARKC & 2.5561e--03  & 79(0) & 998 & 150 & 1156 \\
		& NPRKC1 & 3.7919e--06  & 40(0) & 503 & 212 & 715 \\
		& NPRKC2 & 3.8247e--07  & 76(0) & 717 & 304 & 1021\\
		\hline
	\end{tabular}
\end{table}
\end{example}

\begin{example}
\label{ex3}
Consider the 2-dimensional Burrelator problem with periodic boundary \cite{abdulle2013}
\begin{flalign}	
w_t &= D( w_{xx}+w_{yy})+A ( -0.5w_{x}+w_{y})+ w^{2}\hat{w}-2w+1.3,\nonumber \\
\hat{w}_t &= D ( \hat{w}_{xx}+\hat{w}_{yy})+A (0.4\hat{w}_{x}+0.7\hat{w}_{y}) + w - w^{2}\hat{w},\nonumber \\	
w(x,y,0) &= 22y(1-y)^{3/2},~~~~x,y\in[0, ~1],~~~~t\in[0, ~1],\nonumber \\
\hat{w}(x,y,0) &= 22x(1-x)^{3/2}.\label{eq:6.3}
\end{flalign}
Let $N=200$, and consider two distinct sets of parameters $D=0.04,~A=0.02$ and $D=0.04,~A=2$.
The corresponding numerical results are shown in Figures \ref{fig:Bur1} and \ref{fig:Bur2}.

From Figure \ref{fig:Bur1}, we can analyze whether the actual accuracy of each numerical method can achieve the specified tolerance.
It is not difficult to observe that our methods NPRKC1 and NPRKC2 demonstrate good performance, with the method NPRKC2 in particular meeting the accuracy requirements across all cases.
However, as can be observed from the right subfigure of Figure \ref{fig:Bur1}, the methods PRKC, ARKC and PIROCK all exhibit certain issues for the second set of parameters.
For the PRKC method, the step size restriction from stability causes its actual accuracy to remain almost unchanged with varying $tol$.
For the methods ARKC and PIROCK, their actual accuracy is far from reaching the specified tolerance.
As can be seen from Figure \ref{fig:Bur2}, our methods NPRKC1 and NPRKC2 also demonstrate advantages from the perspective of computational efficiency,
and the advantage become more pronounced in case of the second set of parameters.

\begin{figure}[H]
	\centering
	\begin{minipage}[b]{0.48\textwidth}
		\includegraphics[width=\textwidth]{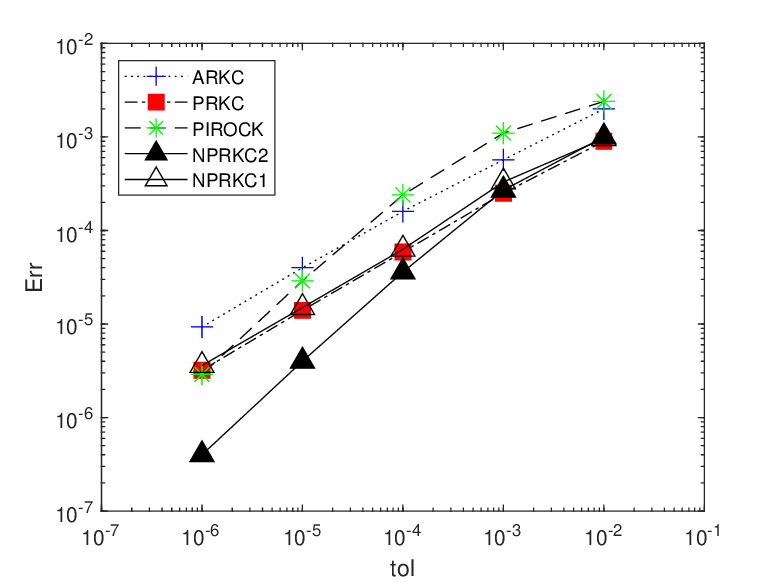} 	
	\end{minipage}
	\hfill 
	\begin{minipage}[b]{0.48\textwidth}
		\includegraphics[width=\textwidth]{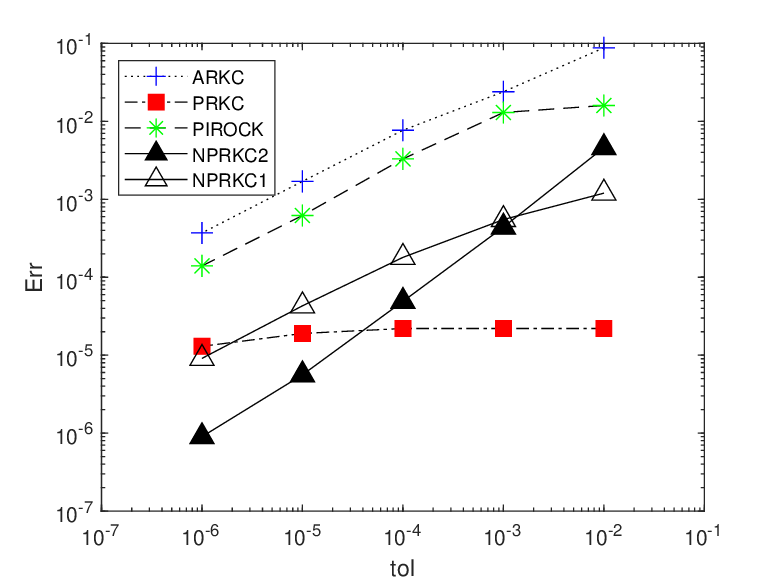}   
	\end{minipage}
	\caption{The results of Example \ref{ex3}: $tol$ $vs.$ $Err$. $D = 0.04$, $A = 0.02$ (left) and $D = 0.04$, $A = 2$ (right). }
	\label{fig:Bur1}
\end{figure}
\begin{figure}[H]
	\centering
	\begin{minipage}[b]{0.48\textwidth}
		\includegraphics[width=\textwidth]{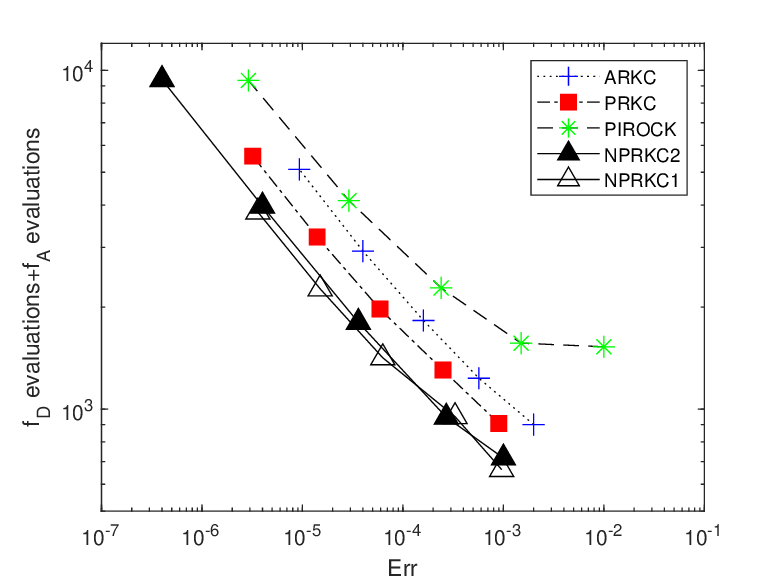}	
	\end{minipage}
	\hfill 
	\begin{minipage}[b]{0.48\textwidth}
		\includegraphics[width=\textwidth]{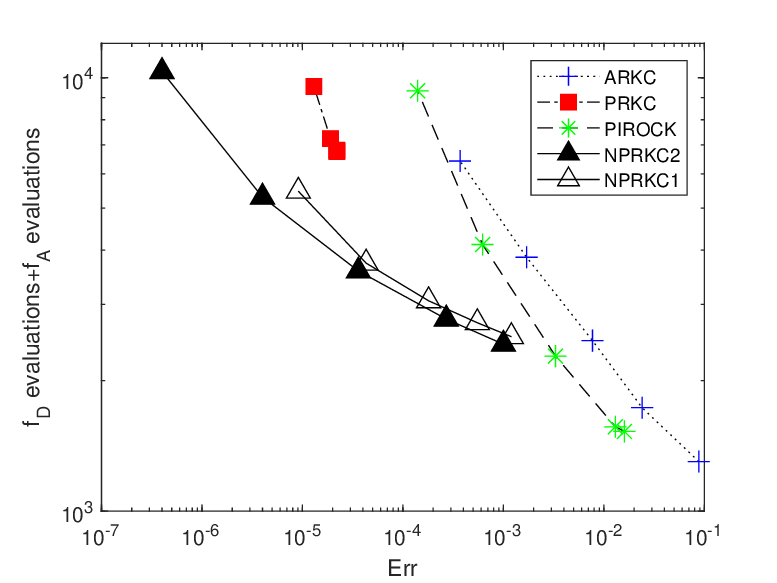}   
	\end{minipage}
	\caption{The results of Example \ref{ex3}: $Err$ $vs.$ $cost$. $D = 0.04$, $A = 0.02$ (left) and $D = 0.04$, $A = 2$ (right). }
	\label{fig:Bur2}
\end{figure}

\end{example}

\begin{example}
\label{ex4}
 Consider the 1-dimensional Burgers equation with periodic boundary \cite{li2023implicit,almuslimani2023}
\begin{flalign}
	w_t &= D w_{xx}+A ww_x , \quad (x,t)\in[0,~1]\times[0,~0.5], \nonumber \\
    w(x, 0) &=1+cos(2\pi x). \label{eq:6.4}
\end{flalign}
Let $D=0.5, ~A=10$ and $N=100$.
Under this set of parameters, the comparisons will be conducted among the methods ARKC, PIROCK and NPRKC.
The corresponding numerical results are shown in Figure \ref{fig:burger1D}.

\begin{figure}[H]
	\centering
	\begin{minipage}[b]{0.48\textwidth}
		\includegraphics[width=\textwidth]{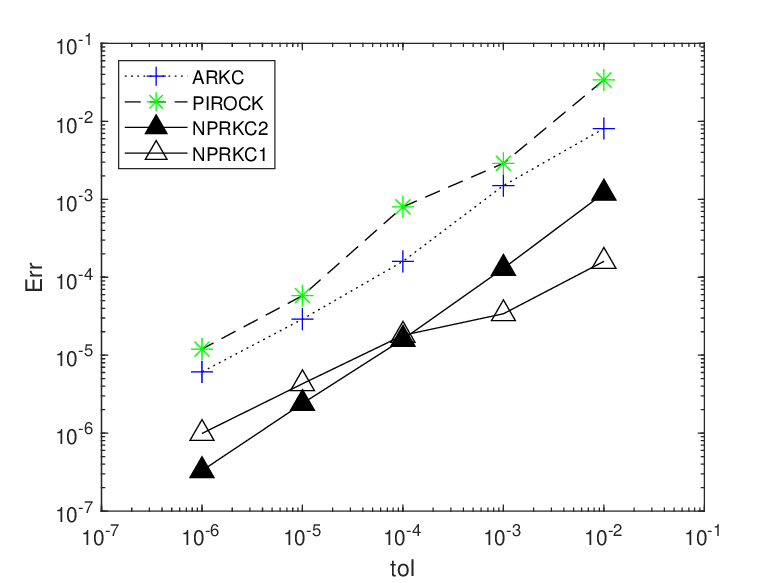} 	
	\end{minipage}
	\hfill 
	\begin{minipage}[b]{0.48\textwidth}
		\includegraphics[width=\textwidth]{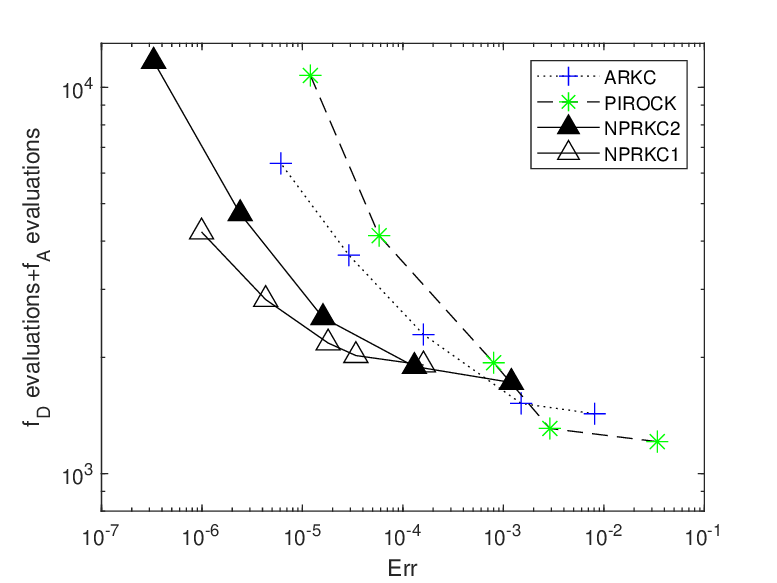}  
	\end{minipage}
	\caption{The results of Example \ref{ex4}: $tol$ $vs.$ $Err$ (left) and $Err$ $vs.$ $cost$ (right)}
	\label{fig:burger1D}
\end{figure}

\end{example}

\begin{example}
\label{ex5}
 Consider the 2-dimensional Burgers equation with periodic boundary \cite{zhao2011new}
\begin{flalign}
     w_t &= D( w_{xx}+w_{yy}) +A (ww_{x}+\hat{w}w_{y}),\nonumber \\
	 \hat{w}_t &= D( \hat{w}_{xx}+\hat{w}_{yy}) +A (w\hat{w}_{x}+\hat{w}\hat{w}_{y}),  \quad (x,y,t)\in[0, ~1]\times[0,~1]\times[0, ~0.5],\nonumber \\
	 w(x,y,0)&=1+\cos(2\pi x)\cos(2\pi y),\nonumber \\
	 \hat{w}(x,y,0)& = 1+\sin(2\pi x)\sin(2\pi y). \label{eq:6.5}
\end{flalign}
Let $D=0.2, ~A=4$ and $N=100$. The corresponding numerical results are shown in Figures \ref{fig:burgers2D}-\ref{fig:Burgerzj2}.

As shown in Figures \ref{fig:burger1D} and \ref{fig:burgers2D},
the methods ARKC and PIROCK still exhibit the issue where the actual numerical errors exceed the specified tolerance,
and this problem becomes more pronounced in the 2-dimensional Burgers equation.
For our methods NPRKC1 and NPRKC2, they perform well in terms of both computational accuracy and efficiency.

\begin{figure}[H]
	\centering
	\begin{minipage}[b]{0.48\textwidth}
		\includegraphics[width=\textwidth]{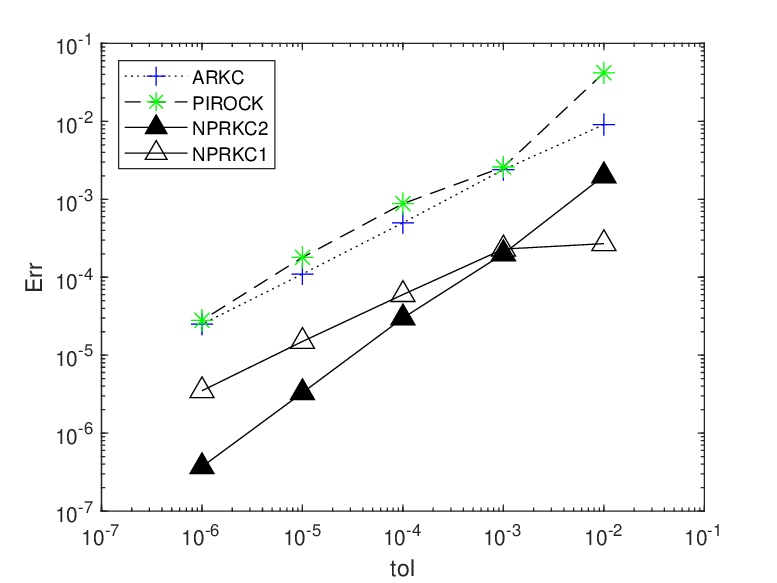} 	
	\end{minipage}
	\hfill 
	\begin{minipage}[b]{0.48\textwidth}
		\includegraphics[width=\textwidth]{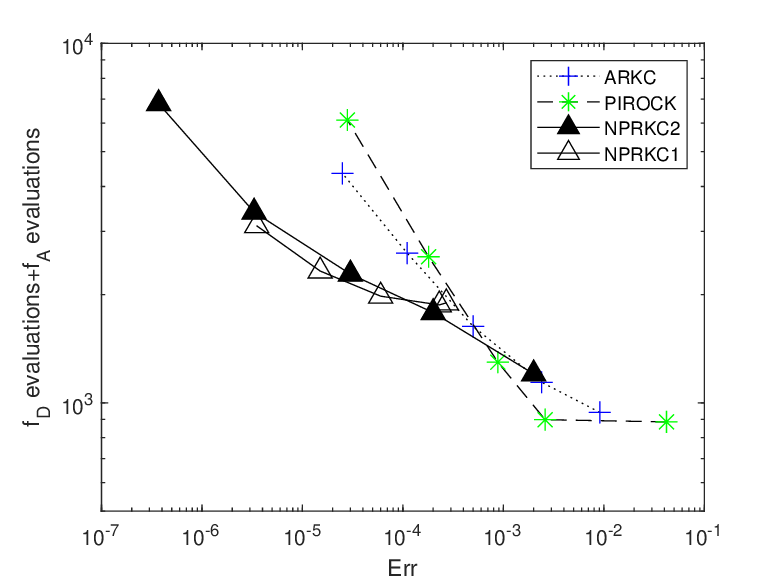}   
	\end{minipage}
	\caption{The results of Example \ref{ex5}: $tol$ $vs.$ $Err$ (left) and $Err$ $vs.$ $cost$ (right) }
	\label{fig:burgers2D}
\end{figure}

\begin{figure}[H]
	\centering
	\begin{minipage}[b]{0.48\textwidth}
		\includegraphics[width=\textwidth]{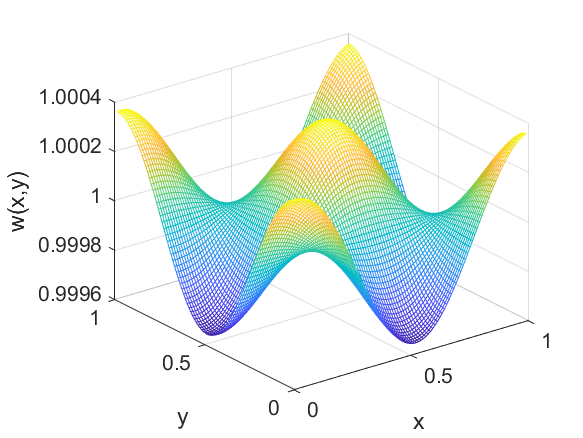}	
	\end{minipage}
	\hfill 
	\begin{minipage}[b]{0.48\textwidth}
		\includegraphics[width=\textwidth]{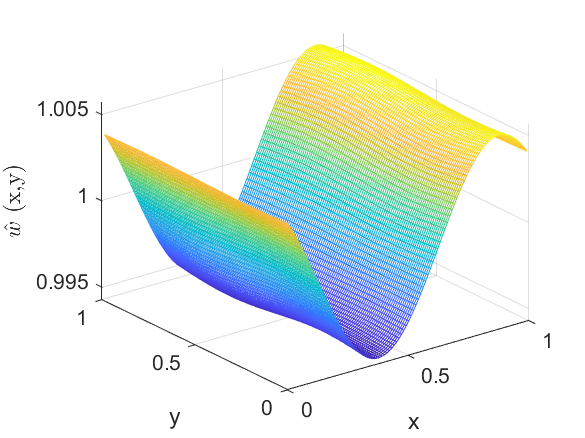}   
	\end{minipage}
	\caption{Reference solutions $w$ (left) and $\hat{w}$ (right) of Example \ref{ex5} at $t = 0.5$ }
	\label{fig:Burgerzj1}
\end{figure}

\begin{figure}[H]
	\centering
	\begin{minipage}[b]{0.48\textwidth}
		\includegraphics[width=\textwidth]{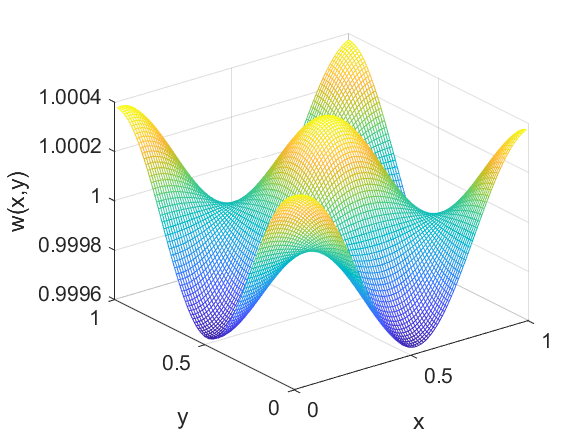}	
	\end{minipage}
	\hfill 
	\begin{minipage}[b]{0.48\textwidth}
		\includegraphics[width=\textwidth]{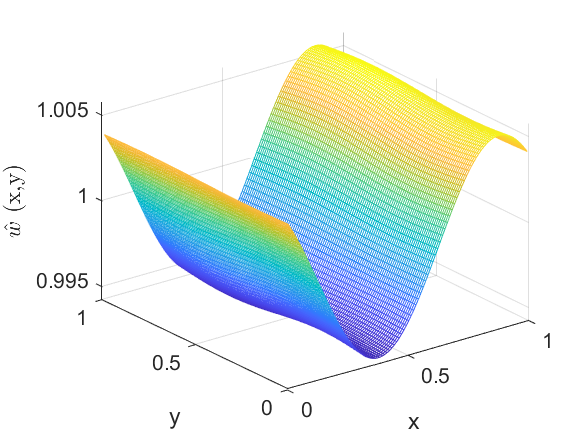}   
	\end{minipage}
	\caption{The numerical solutions $w$ (left) and $\hat{w}$ (right) of the NPRKC1 method with $tol=10^{-5}$. }
	\label{fig:Burgerzj2}
\end{figure}

\end{example}

\section{Conclusion}
\label{sec.7}

In this paper, a new class of second-order partitioned RKC methods are proposed for the ODEs containing moderately stiff and non-stiff terms.
We treat the moderately stiff term with an $s$-stage RKC method and treat the non-stiff term with a $4m$-stage explicit RK method,
where both the parameters $s$ and $m$ can be flexibly adjusted according to the needs of the problems.
We analyzed the convergence and stability of the constructed methods, and presented the variable step-size control strategies as well as the selection strategy for the parameters $s$ and $m$.
Compared to the existing partitioned explicit stabilized methods PRKC, ARKC and PIROCK,
our new methods offer advantages across multiple aspects, including applicability, computational accuracy and efficiency.

\section*{Acknowledgments}
This research is supported by the National Science Foundation of China (No. 12101525) and
the Natural Science Foundation of Hunan Province of China (No. 2023JJ40615).

\bibliographystyle{tfs}
\bibliography{NPRKCreferences}

\end{document}